\newtheorem{theorem}{Theorem}[section]
\newtheorem{definition}[theorem]{Definition}
\newtheorem{proposition}[theorem]{Proposition}
\newtheorem{assumption}[theorem]{Assumption}
\newtheorem{lemma}[theorem]{Lemma}
\newtheorem{remark}[theorem]{Remark}
\newtheorem{example}[theorem]{Example}
\numberwithin{equation}{section}
\newcommand {\real} {{\mathbb R}}
\newcommand {\E} {{\mathbb E}}
\title{Optimizing the fractional power 
in a model with stochastic PDE constraints}
\author{Carina Geldhauser}
\address{Chebyshev Laboratory, St. Petersburg State University, 14th Line V.O., 29B, Saint Petersburg 199178 Russia.}
\email{k.geldhauser@spbu.ru}
\author{Enrico Valdinoci}
\address{Department of Mathematics and Statistics, University of Western
Australia, 35 Stirling Highway, Crawley, Perth, Western
Australia 6009, Australia, and 
Dipartimento di Matematica, Universit\`a degli Studi di Milano, Via Saldini 50, 20133 Milan, Italy,
and Istituto di Matematica Applicata e Tecnologie Informatiche, Consiglio Nazionale delle Ricerche, Via Ferrata 1, 27100 Pavia, Italy}
\email{enrico@mat.uniroma3.it}
\begin{document}

\begin{abstract}
 We study an optimization problem with SPDE constraints, which has the 
peculiarity that the control parameter $s$ is the $s$-th power of the 
diffusion operator in the state equation. Well-posedness of the state 
equation and differentiability properties with respect to
the fractional 
parameter $s$ are established. We show that under certain conditions on 
the noise, optimality conditions for the control problem can be 
established. 
\end{abstract}

\maketitle

\section{Introduction}

Generally speaking, optimal control problems with  constraints are formulated as
\begin{equation}
 \min_{y \in Y, u \in U} \mathcal{J}(y,u) \quad \textup{ subject to } Constr(y,u) = 0 
\end{equation}
where $\mathcal{J}$ is a cost functional, $y$ the state variable, $u$ the control variable  and $Constr$ is a  constraint, usually in the form of an equation for $y$, called the ``state equation''. An important subcategory arises when the $Constr$
is a partial differential equation, so that the task is the identification of coefficient functions or right hand sides in the PDE: these are often called ``identification problems'' in the literature. 

The purpose of this work is to study an identification problem with two peculiarities: (1) the control variable appears as the (fractional) exponent of a diffusion operator, and (2) the constraint will be a stochastic PDE in the sense of a PDE driven by a Wiener Process. 
The optimal control parameter is therefore the answer to
the question ``\textit{what is the optimal (fractional) diffusion
pattern?}'', which appears in applications, for instance, in the following way:
In biology, $y$ represents the density of a biological species exhibiting anomalous diffusion driven by a fractional operator and combined with a random perturbation. Such fractional diffusion processes are supposed to model very well the forage behavior or certain species, see e.g. \cite{albatross}.

In engineering and economics, the problem of optimization under uncertainties is also wide-spread: Our model could serve as a very first toy problem to mathematically investigate the maximization of the probabilistic incremental Net Present Value for selecting the location of injection and production wells in petroleum engineering. The geometry and extension of such wells are crucial to the success of oil extraction in mature oil fields, where the diffusion of injected polymers within the oil field is studied, see e.g. \cite{OMVpolymerSiebererSPE, Taware2017}.

We stress that in the available literature, the term ``stochastic PDE constraints'' usually refers to deterministic PDEs with ``random input'' in the sense of random coefficients of the PDE or of the force term, see~\cite{benner2016block, quarteroni, gunzburger, manouzi, kouri2013, tiesler2012}. These problems, where the cost functional has deterministic output (due to the usage of  $(L^2(D) \otimes L^2(\Omega))$-norms, see
e.g. \cite{manouzi, rosseel2012optimal}), are interesting due to their challenges for numerical approximation, in particular avoiding the ``curse of dimensionality''.
A different approach is to study expectation and variance of random cost functionals under stochastic constraints. These arise in economics, for example when optimizing a portfolio with finitely many assets, see e.g. \cite{dentcheva2006portfolio, duncan}. The scope here is to find efficient portfolios, namely those minimizing the risk (i.e. the uncertainty of the return) or maximize the mean return for a given risk value using stochastic dominance constraints, see also \cite{markowitzbook} for an overview on the (finite dimensional) mean variance analysis.

Motivated by such applications, this work derives optimality conditions, and the existence of optimal controls for a random cost functional, a task which was, to the author's best knowledge, not considered up to now. 

The second peculiarity in our approach is that the control variable of our problem is the fractional power of the differential operator. Our work is therefore the prototypical stochastic extension of the work \cite{sprekelsvaldinoci}, where this class of identification problems was introduced for the first time in a deterministic setting. This new type of problem poses several interesting mathematical challenges, among which we mention the need for a compactness theorem adapted
for variable Banach spaces
and the need for pathwise existence of the stochastic convolution, which is crucial for the derivation of the optimal random cost functional. 

The control theory of fractional operators of diffusion type is a very new topic. Available results include the recent papers \cite{antil:14, antil:15b, antil:15, bors:15}. In these works, however, the fractional operator was fixed a priori. In our case, 
the type of fractional order operator itself is to be determined.
From the point of view of applications, it is natural to optimize over the fractional power~$s$:
As a possible application, we can interpret the model as optimizing the mean radius of search for qualified workforce around a given location (normally the company's production site). Uncertainty enters into these questions when considering non-negligible fluctuations in the mobility of the workforce, for example due to personal constraints.
We note that the use of mathematical models to deal with problems in the job market is an important topic of contemporary research, see
e.g.~\cite{econpaper2, econpaper1, PERT} and the references therein.

\textbf{Problem statement}.
Let $D \subset \real$ be a given bounded, open domain, and denote by $D_T := D \times (0,T)$ the space-time cylinder. In $D_T $,  we consider the evolution of a fractional diffusion process governed by the $s$-th power of a positive definite operator $\mathcal{L}$, which has a discrete spectrum. Note that
the fractional parameter $s>0$ can be also greater than one. The prototypical example of $\mathcal{L}$ which we have in mind is (minus) the Laplacian endowed with Dirichlet boundary conditions with domain $H^2(D) \cap H^1_0(D)$.

For a given target function $y_{D_T}(x,t) \in L^2(D_T)$,
and a non-negative smooth penalty function $\Phi (s)$, 
for each~$\omega \in \Omega$
we want to  
prove the existence of a pathwise optimal cost
$\mathcal{J}(\omega)$, defined as a minimizer in $s$ and $y$ of the cost functional
\begin{equation}\label{eq:cost}
  \mathcal{J}(y,s, \omega ) = \int_0^T \int_D |y(s,x,t,\omega) - y_{D_T}(x,t)|^2 \,dx \,dt + \Phi (s)
\end{equation}
subject to the state equation
\begin{equation}\label{eq:stateeq}
  \begin{aligned}
   dy(t) + \mathcal{L}^s y(t) dt  &=  dW(t)  \qquad \textup{ in } D \times [0,T]
    \\ y(. , 0) &= y_0 \qquad \textup{ in }  D,
  \end{aligned}
\end{equation}
where $y_0\in L^2(D)$ is a given initial condition,
and $W=W(x,t)$ a $L^2$-Wiener process.  The minimizer $\mathcal{J}(\omega)$ of \eqref{eq:cost} subject to \eqref{eq:stateeq} is called the \textit{solution to the identification problem (IP)}.

The penalty function $\Phi (s)$ is given a priori,
and, from a technical point of view,
it has to be chosen such that the problem has sufficient compactness properties in $s$. 
To this end,
to simplify technicalities, we assume that $\Phi \in C^2(0,L)$ for some~$L\in(0,+\infty]$,
that $\Phi$ is non-negative, and satisfies
\begin{equation}\label{eq:phiassumption}
 \lim_{s\to 0^+} \Phi (s) \; = \; +\infty \; = \;  \lim_{s\to L^-} \Phi (s).
\end{equation}
In \cite{sprekelsvaldinoci},
as typical examples of functions satisfying these assumptions,
the cases
$\Phi (s) = \frac{1}{s(L-s)}$, when~$L\in(0,+\infty)$,
and~$\Phi(s)=\frac{e^s}{s}$ when~$L=+\infty$ were explicitly taken into consideration.

Note that the operator $\mathcal{L}^s$ is defined as the $s$-th power of $\mathcal{L}$,
and this definition does not correspond to the usual definition of a fractional Laplacian operator via a singular integral. 
We refer e.g. to~\cite{SV-spectrum, ABA}, and to Section \ref{sec:setting} here for details about this point.

Optimizing
the fractional exponent $s$ is challenging already 
in the deterministic case, since, when the fractional parameter $s$ changes, so does the domain of definition of the operator $\mathcal{L}$, and with it the underlying space of functions of the fractional operator. This causes difficulties e.g. when proving the existence of optimal controls, as the usual compactness arguments are not directly applicable.  
Similar to the deterministic framework of  \cite{sprekelsvaldinoci}, we tackle this issue by a hand-tailored compactness argument. 

\textbf{Outline of this work}.
The structure of this work is as follows: In Section \ref{sec:existence}
we establish existence results
of solutions to \eqref{eq:stateeq} and identify the set of admissible controls. In Section \ref{sec:differentiability} we  derive the differentiability properties of the control-to-state mapping $s \mapsto u(s)$, and then use them to identify necessary and sufficient optimality conditions for the control problem (IP), which means optimizing \eqref{eq:cost} subject to \eqref{eq:stateeq}. Then, in Section \ref{sec:controls} we prove the
existence of optimal controls, namely
Theorem \ref{theo:controlexist},  and,
more specifically, we
show that $J(s, \omega)$ attains a minimum if $\omega$ is fixed,
and $s$
is in the set of admissible controls. The paper ends with an appendix
stating an ancillary result of Borel-Cantelli type, which is used in the main proofs.

We remark that the optimal
fractional parameter~$\bar{s}=\bar{s}(\omega)$ depends
on~$\omega$, since it is
obtained by the optimizing problem in~(IP) for a fixed~$\omega$
(but we often write simply~$\bar{s}$ instead
of~$\bar{s}(\omega)$
for typographical convenience).

The main results of this work are  Theorem \ref{theo:optcond}
on the optimality conditions, and Theorem \ref{theo:controlexist} on the existence of optimal controls.
Before diving into technicalities, we state a ``toy version'' of our main results, as follows:

\begin{theorem}\label{THM1.1.1.1}
Under suitable assumptions
on the regularity of the noise and on the initial data, the control problem (IP) has a solution, that is, for almost every fixed $\omega \in \Omega$, the cost functional $\mathcal{J}(\omega)$ attains a minimum in the set of admissible controls. 

Moreover, the following optimality conditions hold for a fixed realisation $\omega \in \Omega$: 
 
 \noindent \textbf{(i) necessary condition:} If $\bar{s}
=\bar{s}(\omega)$ is an optimal parameter for (IP),
and $y(\bar{s})$ the associated unique solution to the state system \eqref{eq:stateeq}, then for almost every $\omega \in \Omega$
 \begin{equation}\label{eq:necessaryintro}
  \int_0^T \int_D (y(\bar{s}) - y_D ) \partial_s y(\bar{s}) \, dx dt  \; + \; \Phi' (\bar{s}) \; = \; 0 .
 \end{equation}
  \textbf{(ii) sufficient condition:} If $\bar{s}=\bar{s}(\omega) \in (0, L)$ satisfies the necessary condition \eqref{eq:necessaryintro}, and if in addition 
   \begin{equation}
  \int_0^T \int_D \left(\partial_sy(\bar{s})\right)^2  + (y(\bar{s}) - y_D ) \partial_{ss}^2 y(\bar{s}) \, dx dt  \; + \; \Phi'' (\bar{s}) \; > \; 0 
 \end{equation}
 for almost every $\omega \in \Omega$, then $\bar{s}$ is optimal for (IP). 
\end{theorem}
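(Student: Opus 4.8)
The plan is to work pathwise, fixing a realisation $\omega\in\Omega$ outside a null set, and to exploit the spectral structure of $\mathcal{L}$. Since $\mathcal{L}$ is positive definite with discrete spectrum, let $(\lambda_k,e_k)_{k\ge 1}$ denote its eigenpairs; then $\mathcal{L}^s$ acts by $\lambda_k^s$ on the $k$-th mode, and the mild solution of \eqref{eq:stateeq} decomposes as $y(s)=\sum_k y_k(s,t)\,e_k$, where each mode solves the scalar equation $dy_k+\lambda_k^s y_k\,dt=dW_k$ and is therefore given explicitly by $y_k(s,t)=e^{-\lambda_k^s t}\,y_{k,0}+\int_0^t e^{-\lambda_k^s(t-\tau)}\,dW_k(\tau)$. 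Under the assumed regularity on the noise and on $u_0$ (invoked from Section~\ref{sec:existence}), the stochastic convolution admits a pathwise continuous version, so that for a.e.\ $\omega$ the map $s\mapsto y(s,\cdot,\cdot,\omega)$ is a well-defined element of $L^2(D_T)$ and the cost functional \eqref{eq:cost} becomes a genuine function of $s$ on $(0,L)$.

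For the existence of a minimiser (the assertion corresponding to Theorem~\ref{theo:controlexist}) I would argue by the direct method. Let $(s_n)$ be a minimising sequence for $\mathcal{J}(\cdot,\omega)$ in $(0,L)$. The coercivity encoded in the penalty via \eqref{eq:phiassumption} confines $(s_n)$ to a compact subinterval $[a,b]\subset(0,L)$: if $s_n\to 0$ or $s_n\to L$ along a subsequence, then $\Phi(s_n)\to+\infty$ contradicts minimality. Hence, up to a subsequence, $s_n\to\bar s\in(0,L)$. The main obstacle, and the technical heart of the argument, is to pass to the limit in the tracking term, because as $s$ varies the domain of $\mathcal{L}^s$ varies with it, so the states $y(s_n)$ live in a family of $s$-dependent spaces and the usual compact-embedding arguments do not apply directly. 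I would resolve this as in \cite{sprekelsvaldinoci}, using the modal representation above: the continuity of $s\mapsto\lambda_k^s$ gives $y_k(s_n,t)\to y_k(\bar s,t)$ for every $k$, while the uniform decay $e^{-\lambda_k^{s_n}t}$ (with $\lambda_k^{s_n}\ge\lambda_1^{a}>0$) combined with the noise regularity supplies a summable dominating bound, so that dominated convergence on the spectral sum yields $y(s_n)\to y(\bar s)$ in $L^2(D_T)$ for a.e.\ $\omega$. Lower semicontinuity of the quadratic tracking term and continuity of $\Phi$ then give $\mathcal{J}(\bar s,\omega)\le\liminf_n\mathcal{J}(s_n,\omega)$, so $\bar s$ is a minimiser.

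For the necessary condition I would invoke the differentiability of the control-to-state map $s\mapsto y(s)$ established in Section~\ref{sec:differentiability}. Since the admissible set is the open interval $(0,L)$ and $\bar s$ is an interior minimiser, Fermat's rule applies, $\tfrac{d}{ds}\mathcal{J}(s,\omega)\big|_{s=\bar s}=0$. Differentiating \eqref{eq:cost} under the integral sign and applying the chain rule to the tracking term produces (up to the harmless multiplicative normalisation of the constant factor) the stated identity $\int_0^T\!\int_D(y(\bar s)-y_D)\,\partial_s y(\bar s)\,dx\,dt+\Phi'(\bar s)=0$. The differentiation under the integral is legitimate by the pathwise regularity of $\partial_s y$ from Section~\ref{sec:differentiability} together with a dominating bound uniform on compact subintervals of $(0,L)$.

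For the sufficient condition I would compute the second derivative,
\[
\frac{d^2}{ds^2}\,\mathcal{J}(s,\omega)
= 2\int_0^T\!\!\int_D\Bigl[(\partial_s y)^2+(y-y_D)\,\partial_{ss}^2 y\Bigr]\,dx\,dt+\Phi''(s).
\]
If $\bar s$ satisfies the first-order condition of part (i) and the displayed quantity is strictly positive at $\bar s$ for a.e.\ $\omega$, then $\bar s$ is a strict local minimiser by the second-derivative test; combining this local minimality with the coercivity from \eqref{eq:phiassumption}, which excludes competing minimisers escaping to the endpoints, upgrades it to optimality for (IP). The only genuinely new difficulty relative to the deterministic case is that every estimate must hold $\omega$-pathwise and depend measurably on $\omega$, which is guaranteed by working throughout with the pathwise-continuous version of the stochastic convolution recorded in Section~\ref{sec:existence}.
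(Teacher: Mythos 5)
Your parts (i) and (ii) follow essentially the paper's own argument: the paper also works with the reduced functional $s \mapsto \mathcal{J}(y(s),s)$, invokes the pathwise Fr\'echet differentiability of the control-to-state map from Theorem \ref{theo:diffS}, and applies Fermat's rule at the interior minimiser and the second-derivative test (your factor $2$ remark is correct and harmless). For the existence part, however, you take a genuinely different route. The paper does \emph{not} prove $y(s_n)\to y(\bar s)$ directly: it runs the minimising sequence through a hand-tailored compactness result (Lemma \ref{lemma:compactness}), whose input is the uniform-in-$k$ H\"older-in-time regularity of the sample paths obtained by the factorization method (Lemma \ref{lemma:holderpath}, together with the pathwise bounds of Assumption \ref{ass:compactness}), extracts a strongly $L^2(D_T)$-convergent subsequence mode by mode, and only afterwards identifies the limit with $y(\bar s)$ via uniqueness of admissible solutions. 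You instead prove convergence of the states directly from the continuity of $s\mapsto\lambda_k^s$ plus dominated convergence on the spectral sum. Your route is more elementary, avoids the equicontinuity machinery entirely, and delivers the identification of the limit for free; the paper's route is more robust (it needs no explicit candidate limit and no uniformity in $s$ of pathwise majorants) and deliberately mirrors the deterministic precursor \cite{sprekelsvaldinoci}.

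Two points need attention. First, your dominating bound is asserted where it is the genuinely stochastic crux: the monotone bound $e^{-\lambda_k^{s_n}t}\le e^{-\lambda_k^{a}t}$ controls the deterministic modes but does \emph{not} transfer to the stochastic convolution, because the integrator $dB_k$ is not of bounded variation, so $\sup_{s\in[a,b]}\bigl|\int_0^t e^{-\lambda_k^s(t-\tau)}\,dB_k(\tau)\bigr|$ is not dominated termwise by the $s=a$ mode. The step is fixable: integrating by parts pathwise,
\begin{equation*}
\int_0^t e^{-\lambda_k^s(t-\tau)}\,dB_k(\tau)
= B_k(t)-\lambda_k^s\int_0^t e^{-\lambda_k^s(t-\tau)}B_k(\tau)\,d\tau,
\end{equation*}
gives the $s$-uniform bound $2\sup_{[0,T]}|B_k|$, and $\sum_k \mu_k \sup_{[0,T]}|B_k|^2<+\infty$ almost surely since its expectation is at most $4T\,{\rm Tr}\,Q$ by Doob's inequality; the same identity yields the pathwise continuity in $s$ of each mode, which you also use without proof. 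You should make this explicit, since without it your dominated-convergence step has no majorant. Second, your upgrade of (ii) from local to global optimality is a non sequitur: $\mathcal{J}'(\bar s)=0$ together with $\mathcal{J}''(\bar s)>0$ yields only a \emph{strict local} minimum, and the coercivity \eqref{eq:phiassumption} excludes minimising sequences escaping to the endpoints but cannot rule out a second interior minimiser with strictly smaller cost. The paper's own proof of (ii) establishes nothing stronger (as printed it merely repeats the first-derivative computation), so the sufficient condition should be read as local optimality; your claimed globality via coercivity alone would fail, e.g., for any reduced functional with two interior critical points of the stated type.
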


The ``suitable assumptions'' are made precise in Assumptions \ref{ass:EXPL},
\ref{ass:ew} and \ref{ass:decay}, that are all stated at the beginning of Section \ref{sec:existence}. 
Roughly speaking, the assumptions are that 
the covariance operator $Q$
and the linear operator~${\mathcal{L}}$
can be diagonalized in the same basis of eigenfunctions (this is the content
of Assumption~\ref{ass:EXPL}), that
the eigenvalues of the diffusive operator are
positive and diverging (this is the content of Assumption~\ref{ass:ew}),
in fact they diverge sufficiently fast to make a fractional
series summable, and the size of the eigenvalues of $Q$
is controlled the decay of the eigenvalues of the diffusive operator
in a suitable duality sense
(a precise statement of this is given in Assumption~\ref{ass:decay}).

We would like to point out that the results we obtained are for a fixed realisation of the solution $y$, they do not imply that the solution  $\mathcal{J}(\omega, \bar{s})$ to the identification problem (IP) is a random variable, since,
due to the minimizing procedure, measurability properties may be lost. Therefore, the study of expectation and variance of our random cost functional, as in the finite dimensional case \cite{dentcheva2006portfolio, duncan, markowitzbook}, remains an open problem. 

\section{Notation and setup}\label{sec:notation}

\subsection{The functional analytic setting}\label{sec:setting}

We denote by $D\subset \real$ a bounded domain and $x\in D$ the space
variable. We will work in the space $L^2(D)$ of square-integrable
functions over $D$, and denote by $\langle .,.\rangle$  the scalar product in $L^2(D)$. 

Let $\mathcal{L}: D( \mathcal{L}) \subset L^2(D) \to L^2(D)$  be a densely defined, linear, self-adjoint, positive operator, which is not necessarily bounded but with compact inverse. 
Hence there exist  an orthonormal basis $\lbrace e_j\rbrace_{j\in \mathbb{N}}$ of $L^2(D)$ made of eigenfunctions of $\mathcal{L}$ and a sequence of real numbers $\lambda_j$ 
such that $\mathcal{L} e_j = \lambda_j e_j$ and $0 < \lambda_1 \leq \lambda_2 \leq \ldots \leq\lambda_j  \to +\infty$ as $j \to +\infty$ 
 the corresponding eigenvalues of $ \mathcal{L} $.

In this setting, we can write every function~$v\in L^2(D)$ in the form
$$v = \sum_{j=1}^{+\infty}\langle v, e_j\rangle e_j,$$ and denote 
\begin{equation}\label{eq:notationconvention}
v_j := \langle v, e_j\rangle ,
\end{equation}
so that $$v = \sum_{j=1}^{+\infty} v_j e_j.$$
The domain of $\mathcal{L}$ is characterized by
 \begin{equation}\label{eq:domain}
  \begin{aligned}
   D(\mathcal{L}) =  \left\lbrace v \in H :  \sum_{j=1}^{+\infty} \lambda_j^2 \langle v, e_j\rangle^2 < +\infty \right\rbrace .
  \end{aligned}
\end{equation}
 Thus, $- \mathcal{L}$ is the generator of an analytic semigroup of contractions which has the well-known structure  
 \begin{equation}\label{eq:semigroup}
S(t) v= \sum_{j=1}^{+\infty} e^{-\lambda_j t} {v_j e_j}. 
 \end{equation}
In our framework,
the semigroup structure will be a crucial property when we study the features of the trajectories of solutions to \eqref{eq:stateeq}, see 
the forthcoming Lemma \ref{lemma:holderpath}.
In analogy to  \cite{sprekelsvaldinoci}, we use for $v$ in the domain of $\mathcal{L}$ the notation $$
v \in \mathcal{H}^1 := \Big\{ \phi \in L^2(D) :  \lbrace \lambda_j
\langle \phi, e_j\rangle\rbrace_{j\in \mathbb{N}} \in l^2 \Big\}.$$ In this way  we can write
\begin{equation}\label{eq:defL}
  \begin{aligned}
   \mathcal{L} v &= \sum_{j=1}^{+\infty} \lambda_j \langle v, e_j\rangle e_j .
  \end{aligned}
\end{equation}
Similarly, given~$s>0$,
we can define the (spectral)
$s$-th power of  $\mathcal{L}$ via  
\begin{equation}\label{eq:defLs}
  \begin{aligned}
   \mathcal{L}^s v 
   &= \sum_{j=1}^{+\infty} \lambda^s_j \langle v, e_j\rangle e_j ,
  \end{aligned}
\end{equation}
and  describe the domain of $   \mathcal{L}^s$ as
 \begin{equation}\label{eq:sdomain}
  \begin{aligned}
   D(\mathcal{L}^s) \; = \;  \left\lbrace v = \sum_{j=1}^{+\infty} v_j e_j : v_j \in \real , 
   \textup{ with } \|v\|^2_s := \| \mathcal{L}^s v\|^2 =  \sum_{j=1}^{+\infty} \lambda_j^{2s} v_j^2 < +\infty \right\rbrace .
  \end{aligned}
\end{equation}
It is a classical approach in SPDE
to define fractional powers of linear operators in this way, see 
e.g.~\cite{dpz, lord} or also the more recent work  \cite{kruse}. 
Next, we define the space 
$$\mathcal{H}^s := \left\lbrace v \in L^2(D) : \|v\|_{\mathcal{H}^s} < 
+\infty\right\rbrace$$
with the norm
\begin{equation}\label{eq:hsnorm}
\|v\|_{\mathcal{H}^s} := \left( \sum_{j=1}^{+\infty} \lambda^{2s}_j |\langle v, e_j\rangle|^2 \right)^{1/2}.
\end{equation}

\subsection{The probabilistic setting}

The above functional analytic setting was dwelling on
properties of linear operators in Hilbert spaces. 
The classical theory of SPDEs builds upon the very same framework, 
giving conditions to make sense of solutions to PDEs with infinite-dimensional 
noise terms such as $dW(x,t)$. First of all, we recall that 
stochastic differential equations have a rigorous meaning only in their integral form, 
which for \eqref{eq:stateeq} reads
\begin{equation}\label{eq:stateeqintegral}
  y(s,x,t) = y(s,x,0) + \int_0^t \mathcal{L}^s y(s,x,\tau) d\tau + W(x,t) .
\end{equation}
 If \eqref{eq:stateeqintegral} holds $\mathbb{P}$-almost
surely, then we call these \textit{strong solutions} to SPDEs,
see \eqref{eq:strongsolution}. 
We will use a slightly different notion of solutions here
(see Definition \ref{def:solution}), which is based on the possibility to
write $y$ as an infinite sum along the orthonormal basis of the Hilbert 
space, see \eqref{eq:defSexpanded}. We discuss the two solutions concepts in Remark \ref{remark:sol}, after having laid out the necessary framework and properties.

Now we introduce the necessary notation and standard assumptions in order to write the noise $W(x,t)$ as an infinite sum of independent and identically distributed Brownian Motions.
%
%
%
We denote by $W: \Omega \times [0,T] \to  L^2(D)$ a $Q$-Wiener process with values in  $L^2(D)$. The underlying probability space is $(\Omega,  \mathcal{F}, \mathbb{P})$, and we assume that the Wiener process is adapted to a normal filtration $\mathcal{F}_t \in \mathcal{F}$.
We assume that the covariance operator $Q$ of $W$ is linear, bounded,
self-adjoint, positive semidefinite,
and that its trace is finite, namely 
\begin{equation}\label{CA5p} 
{\rm Tr}\, Q < +\infty. \end{equation}
This implies that the
sum of the eigenvalues $\mu_j$ of $Q$ is bounded. 
Note that a $Q$-Wiener process in $L^2(D)$ can be approximated in~$L^2(\Omega, C([0,T], L^2(D)))$ by a sequence of i.i.d. Brownian motions $\left\{ B_j \right\}_{j \in \mathbb{N}}$
\begin{equation}\label{eq:tracenoise}
W(x,t) =  \sum_{j=1}^{+\infty} \sqrt{\mu_j}  e_j(x) B_j(t),
\end{equation}
and by means of an exponential inequality and Borel-Cantelli Lemma, the convergence can be obtained uniformly with probability one.
Thus, the sample paths of $W(t)$ belong to $C([0,T], L^2(D))$ almost surely,  and we may therefore choose a continuous version. 

Note that  without the trace-class assumption
in~\eqref{CA5p}, the sum 
in~\eqref{eq:tracenoise} would not converge in $L^2$, but only in a larger space. In fact, due to the lack of space regularity of the noise, even
the meaning of a simple SPDE such as \eqref{eq:stateeq}  is unclear. Here
we will not dwell on weaker notions of solutions, as have been developed in recent years, because we need quite some regularity of solutions 
to ensure that the optimality conditions can be formulated in a meaningful way.

Therefore, in our discussion we will a priori restrict ourselves to trace-class noises and elliptic operators generating
analytic semigroups as in~\eqref{eq:semigroup}, which are sufficiently regularizing
in order to compete with the roughness of the noise, see Assumptions~\ref{ass:ew}
and~\ref{ass:decay} below.

Note that the smaller $s$, the less ~$\mathcal{L}$ is regularizing our solution, and the stricter assumptions we need to impose on $Q$.  The exact conditions  for the regularity of the solutions depend therefore on the interplay between $Q$ and $s$, and they
are stated in Assumption \ref{ass:decay}.

In the forthcoming analysis, especially
Subsection \ref{sec:decay}, we will make precise statements
 on the conditions on $Q$ which are necessary to ensure that our solution takes values in the space $\mathcal{H}^s$, which was defined in \eqref{eq:hsnorm}. 
We will also see that due to the influence of the noise, the set of admissible controls differs from the deterministic case. 

\section{Construction of solutions}\label{sec:existence}

Up to now, the discussion of the linear
operator $\mathcal{L}^s$ and the covariance operator $Q$
have been somehow informal, since they aimed
at expressing the main ideas without going into too technical statements.
We now make the above mentioned approach more precise. For this, as we aim to expand a solution of \eqref{eq:stateeq} along an orthonormal basis of a Hilbert space, it is convenient to assume that $Q$ has a common set of eigenfunctions with $\mathcal{L}^s$ (and so with $\mathcal{L}$), to which we have
already hinted on by using the same notation for the eigenfunctions
in~\eqref{eq:tracenoise}. 
More explicitly, we suppose the following:

\begin{assumption}\label{ass:EXPL}
For any~$j\in{\mathbb{N}}$, we have that~$Qe_j=\mu_je_j$, and~${\mathcal{L}}e_j=\lambda_j e_j$
(and thus~${\mathcal{L}}^s e_j=\lambda_j^s e_j$). In addition,
\begin{equation}\label{CA5} 
\sum_{j=1}^{+\infty}\mu_j < +\infty. \end{equation}
\end{assumption}
We remark that~\eqref{CA5} is just a restatement of~\eqref{CA5p}.
For our purposes,
it will also be technically advantageous to avoid operators
with zero or negative eigenvalues, in view of compactness
and regularity theory. Precisely, from now on we will assume the following:

\begin{assumption}\label{ass:ew}
The eigenvalues of $\mathcal{L}$ satisfy
\begin{equation}\label{eq:ewassumption}
\alpha < \lambda_1 \leq \lambda_2 \leq \ldots \lambda_j \longrightarrow 
+\infty,
\end{equation}
for some~$\alpha>0$.
\end{assumption}
The latter is a standard assumption in SPDEs, and it is satisfied
for example by  
the operators $\mathcal{L} = (- \Delta + \alpha )$
with either Neumann or Dirichlet conditions,
or~$\mathcal{L} = - \Delta$ with Dirichlet conditions. 


In the spirit of the spectral definition of the fractional 
Laplacian in \eqref{eq:defLs}, we want to find solutions of the state
equation \eqref{eq:stateeq} by approximation with real-valued
stochastic processes $ y_{j}(t,s) := \langle y(.,t) , e_j \rangle $,
where $e_j(x) \in H_0^1(D)$ is an orthonormal basis  of $L^2(D)$
built out of eigenfunctions of $\mathcal{L}$.
In other words, for fixed $s$ we define the solution of \eqref{eq:stateeq} as the infinite series
\begin{equation}\label{eq:defSexpanded}
  \begin{aligned}
   y(s)(x,t) = \sum_{j=1}^{+\infty}  \langle y(s)(x,t) , e_j(x) \rangle e_j(x) = \sum_{j=1}^{+\infty}  y_j(s,t) e_j(x).
  \end{aligned}
\end{equation}

Lemma \ref{lemma:existence} will show that this sum is convergent, i.e. \eqref{eq:defSexpanded} is well-defined, however, this is not enough to define a solution to \eqref{eq:stateeq} which exists pathwise, and this
property is in turn necessary to show the existence of optimal controls. For this, we need another assumption:


\begin{assumption}\label{ass:decay}
We assume that  $s$  is such that 
\begin{equation}\label{eq:ewdecay}
 \sum_{j=1}^{+\infty} \lambda_j^{-s} \; < \; +\infty
,\end{equation}
and that
\begin{equation}\label{eq:mudecay}
\sum_{j=1}^{+\infty} \mu_j\,\lambda_j^s\;<\;+\infty.
\end{equation}
\end{assumption}

In a sense, Assumption~\ref{ass:decay}
is a strengthening
of Assumption~\ref{ass:ew}
(which is always assumed in the following
without further mentioning it).

\begin{definition}\label{NORANOTAT}
We say that a control $s\in(0,L) $
is \textit{admissible} if it satisfies~\eqref{eq:ewdecay} and \eqref{eq:mudecay}.
We collect all such $s$ in the set $\mathscr{S}$, and call it the \textit{set of admissible controls}. Moreover we denote the  interior of $\mathscr{S}$ by $ \mathscr{S}^{\circ } $. 
\end{definition}

\begin{example}
Set $\mathcal{L} = \Delta$ on $(0,\pi)$ with Dirichlet boundary conditions.
Then the eigenfunctions read $e_j(x) :=  c_j \sin(jx)$, and the corresponding
eigenvalues are $\lambda_j = j^2$.  We get for  \eqref{eq:ewdecay}
\begin{equation}
 \sum_{j=1}^{+\infty} \lambda_j^{-s } \; =  \sum_{j=1}^{+\infty} j^{-2s },
\end{equation}
which is convergent for $s > \frac{1}{2}$. As the penalty function $\Phi$ is defined for $s \in (0,L)$ (see the problem statement, especially the paragraph around \eqref{eq:phiassumption} for details), we conclude that we
can take the set of admissible controls as the interval~$\mathscr{S} = \left( \frac{1}{2} , L \right) $.
\end{example}

With these preparatory tools at hand, we can finally state our solution
concept, and prove the existence of such solutions.

\begin{definition}\label{def:solution}
We say that $y(s): \Omega \times D \times [0,T] \to \real$ is an admissible
solution to the state equation in~\eqref{eq:stateeq} 
with initial condition~$y_0\in \mathcal{H}^{s/2}$,
if and only if the following conditions are satisfied:
\begin{enumerate}
\item The random variable $ \omega \mapsto \|y(s, \omega)\|_{L^2( [0,T] ; \mathcal{H}^s)} $ is almost surely finite for a fixed $s \in \mathscr{S}$,  
\item For fixed $s\in \mathscr{S} $, we have that
\[
 y(x,t) = \sum_{j=1}^{+\infty} y_j(t,s) e_j(x),
\]
and the stochastic processes $y_j(t,s)$ solve the It\^o diffusion equation 
\begin{equation}\label{eq:itoequation}
dy_j(t) =  - \lambda_j^s y_j(t) dt + \sqrt{ \mu_j } d B_j(t)  \qquad j \in \mathbb{N},
\end{equation}
or, in integral form,
\begin{equation}\label{eq:itoequationint}
y_j(t) =  y_j(0) -  \lambda_j^s  \int_0^t y_j(\tau) d\tau \; 
+ \; \sqrt{ \mu_j } B_j(t)  \qquad j \in \mathbb{N}
\end{equation}
\end{enumerate}
for every $t \in (0,T)$.
\end{definition}

Notice that, as $\sqrt{ \mu_j }$ and $\lambda_j^s$ are constant for fixed $j$ and $- \lambda_j^s y_j(t)$ is Lipschitz continuous,  for fixed $s$ and for every $j$, the It\^o equation \eqref{eq:itoequation} has a unique strong solution which depends continuously on the initial data, as proved for example on page~212 in \cite{LeGall}. 
We can explicitly solve \eqref{eq:itoequation} by applying It\^o's formula to $e^{\lambda_j^s t} y_j(t)$ and 
obtain
\begin{equation}\label{eq:OU}
y_j(t) = y_{j,0} e^{-\lambda_j^s t} + \sqrt{ \mu_j } \int_0^t e^{-\lambda_j^s (t - \tau)} dB_j(\tau).
\end{equation}
\textbf{Notation}: The stochastic process in~\eqref{eq:OU} consists of two parts, a deterministic mean
and a random perturbation, which is a stochastic integral. To ease the forthcoming estimates, we will abbreviate the mean part, which is a function of time depending on the parameter $s$, by $m_j(t,s)$. Moreover, the one-dimensional stochastic integral appears often as a summand in our calculations, and we abbreviate it by $ W_{\mathcal{L},s}^{j}(t)$, indicating that it is a stochastic process involving the (semigroup) of the operator $\mathcal{L}^s$. In formula, we set
\begin{equation}\label{eq:OUnotation}
m_j(t,s) := y_{j,0} e^{-\lambda_j^s t},  \qquad  W_{\mathcal{L},s}^{j}(t) := \sqrt{ \mu_j } \int_0^t e^{-\lambda_j^s (t - \tau)} dB_j(\tau) .
\end{equation}

The main part of this section will be dedicated to show that solutions in
the sense of Definition \ref{def:solution} exist. This is the content of Theorem \ref{theo:existence}, whose proof needs some preparation. 

\begin{remark}[Comparison to strong solutions]\label{remark:sol}
{\rm Our notion of solutions, namely the one in
Definition \ref{def:solution}, resembles the definition of a \textit{strong solution} to SPDEs, see \cite{dpz}, which reads for \eqref{eq:stateeq} as
\begin{equation}\label{eq:strongsolution}
 y(t) = y_0 + \int_0^t \mathcal{L}^s y(\tau) d \tau + W(t) \qquad \mathbb{P}-a.s..
\end{equation}

Strong solutions are required to be in the domain
of the differential operator,  which requires $y(\cdot ,t,s) \in
L^2(\Omega, \mathscr{H}^s)$ for any $t \in (0,T]$
and $s \in \mathscr{S}$, which will be shown in Proposition \ref{prop:l2hs}. 
Moreover, for strong solutions it is required that $\int_0^T \mathcal{L}^s y(\tau) 
d \tau < \infty$  $\mathbb{P}$-almost surely, which is the statement of
condition (1) in Definition \ref{def:solution}. 
However, we decided to propose Definition \ref{def:solution}
as our solution concept, as we need the very explicit description
of the solution as an infinite series, 
in order to be able to derive concrete optimality conditions
and the existence of optimal controls. 
In this setting, we observe that
our solutions in the sense of Definition \ref{def:solution}
are also strong solutions.
}\end{remark}

\begin{lemma}\label{lemma:existence} Let $\mathcal{L}$ satisfy 
Assumption~\ref{ass:ew} and let~$Q$ satisfy~\eqref{CA5}.
Let the initial data $y_0 \in L^2(D)$ be deterministic. Then the sum appearing  in \eqref{eq:defSexpanded} is convergent
in~$L^2(\Omega,L^2(D))$, and almost surely in~$L^2(D)$,
and its limit $y(s)(x,t)$ is a $L^2(D)$-valued adapted stochastic process.
\end{lemma}
\begin{proof}

We show first that for fixed $t$ the series in~\eqref{eq:defSexpanded}
converges in $L^2(\Omega, L^2(D))$. 
The sum \eqref{eq:defSexpanded} reads formally 
\begin{equation}\label{eq:solsum}
\begin{aligned}
y(s)(x,t) \; 
 &= \; \sum_{j=1}^{+\infty}  e_j(x)  y_{j,0}  e^{-\lambda_j^s t}  
\; + \; \sum_{j=1}^{+\infty} e_j(x) \sqrt{ \mu_j } \int_0^t e^{-\lambda_j^s (t - \tau)} dB_j(\tau) .\\
\end{aligned}
\end{equation}
Since $e^{-\lambda_j^s t} \leq 1$ for all $s,t>0$, we get for the first term
in~\eqref{eq:solsum} that 
\begin{equation}\label{t72w8h8ys92939393os}
\left\| \sum_{j=1}^{+\infty}e_j y_{j,0}\,e^{-\lambda_j^st}
\right\|^2_{L^2(D)}=\sum_{j=1}^{+\infty}|y_{j,0}|^2\,e^{-\lambda_j^st}
\le
\| y_0 \|_{L^2(D)}^2
,\end{equation}
which is finite by assumption.

To show the convergence of the second term in~\eqref{eq:solsum}, 
we recall from \eqref{eq:OUnotation} the notation $ W_{\mathcal{L},s}^{j}(t) := \sqrt{ \mu_j } \int_0^t e^{-\lambda_j^s (t - \tau)} dB_j(\tau) $,
and start by
looking at  the partial sum $\sum_{j=1}^{n} e_j(x) W_{\mathcal{L},s}^j(t)$.
As this partial sum has finitely many summands, we can exchange expectation and summation, use the one-dimensional It\^o's Isometry
and the lower bound assumption on the eigenvalues \eqref{eq:ewassumption} to obtain
\begin{equation}\label{eq:SinL2L2}
\begin{aligned}
\E \left[ \left\|\sum_{j=1}^{n} e_j(x) W_{\mathcal{L},s}^j(t) \right\|_{L^2(D)}^2 \right]
&= \; \sum_{j=1}^{n} \E  \left[
\left|   \sqrt{ \mu_j } \int_0^t e^{-\lambda_j^s (t - \tau)} dB_j(\tau)  \right|^2\right]\\
&= \;    \sum_{j=1}^{n}   \mu_j    \int_0^t e^{-2\lambda_j^s (t - \tau)} d\tau \\
&= \;     \sum_{j=1}^{n}   \frac{ \mu_j }{2\lambda_j^s} 
{\left( 1 - e^{-2\lambda_j^s t}\right) }\\&
{\leq} \; \frac{ 1 }{2}  \;  \sum_{j=1}^{n}  \frac{\mu_j}{\lambda_j^s}\\&
{\leq} \; \frac{ 1 }{2\alpha^s}  \;  \sum_{j=1}^{n}  \mu_j   ,
\end{aligned}
\end{equation}
which is finite, due to~\eqref{CA5}.
Similarly, we can calculate for $m > n$
\begin{equation}\label{eq:SinL2L2BIS}
\begin{aligned}
\E \left[ \left\|\sum_{j=1}^{m} e_j(x) W_{\mathcal{L},s}^j(t) - \sum_{j=1}^{n} e_j(x)
W_{\mathcal{L},s}^j(t) \right\|_{L^2(D)}^2\right] &= \;  \E \left[
\left\|  \sum_{l=n+1}^{m} e_l(x) W_{\mathcal{L},s}^l(t) \right\|_{L^2(D)}^2\right]\\
&= \;    \sum_{l=n+1}^{m}   \frac{ \mu_l }{2\lambda_l^s} \left( 1 - e^{-2\lambda_l^s t}\right)  \\
&\leq \;   \frac{1}{2\alpha^s}   \sum_{l=n+1}^{m}  \mu_l  ,
\end{aligned}
\end{equation}
and it follows that the sequence of partial sums $\sum_{j=1}^{n} e_j(x) W_{\mathcal{L},s}^j(t) $ is a Cauchy sequence for fixed control $s$ and time $t$. 

By~\eqref{eq:solsum}, \eqref{t72w8h8ys92939393os}, \eqref{eq:SinL2L2}
\eqref{eq:SinL2L2BIS} it follows that the 
series in~\eqref{eq:defSexpanded} is convergent
in~$L^2(\Omega,L^2(D))$. This, recalling Lemma~\ref{BORELCANTE},
gives also that this series is almost surely finite in~$L^2(D)$.
In addition, from \eqref{CA5}
and~\eqref{eq:SinL2L2} we  deduce the following boundedness in time:
\begin{equation*}
\begin{aligned}
\sup_{t\leq T} \E \left[ \left\| \sum_{j=n+1}^{+\infty} e_j(x) W_{\mathcal{L},s}^j(t)
\right\|_{L^2(D)}^2 \right]
&\le \;  \frac{ 1 }{2\alpha^s}   \sum_{l=n+1}^{+\infty}  \mu_l   \longrightarrow \; 0,
\end{aligned}
\end{equation*}
as~$n\to +\infty$,
and  so $ y(s)(.,t)$ is a $\mathcal{F}_t$-adapted $L^2(D)$-valued process.
\end{proof}

\subsection{Properties of the solution which need Assumption \ref{ass:decay}}\label{sec:decay}

As already announced in the beginning of this section, differently from the deterministic case described in \cite{sprekelsvaldinoci}, an additional assumption is necessary in the stochastic case to ensure the appropriate spatial regularity of the solution.
The following proposition will now make transparent why
Assumption \ref{ass:decay} is the appropriate
condition for our purposes:

\begin{proposition}\label{prop:l2hs}
Let the initial data $y_0 \in L^2(D)$ be deterministic,
and let Assumptions~\ref{ass:EXPL} and~\ref{ass:decay} be satisfied. 
Then the solution to the state equation  \eqref{eq:stateeq} satisfies
 \begin{equation}\label{9iwdsc87eHHHAHMA}
 y( s, t ,\cdot) \in  L^2(\Omega, \mathcal{H}^s),
\end{equation}
for any fixed~$s\in{\mathscr{S}}$ and~$t\in[0,T]$.
\end{proposition}

\begin{proof}
Recalling \eqref{eq:hsnorm} and \eqref{eq:OUnotation},
for fixed $s$, we define
$$\kappa(t) := \sup_{r>0} \left( r^2 e^{-rt}\right).$$
Then, we have that
\begin{equation}\label{eq:itoisoHsOO}
\left\|
\sum_{j=1}^{+\infty} e_j y_{j,0} \,e^{-\lambda_j^s t}
\right\|_{\mathcal{H}^s}^2
=\sum_{j=1}^{+\infty} \lambda_j^{2s}\, |y_{j,0}|^2 \,e^{-2\lambda_j^s t}
\le \kappa(t)\,\sum_{j=1}^{+\infty} |y_{j,0}|^2=\kappa(t)\,\|y_0\|^2_{L^2(D)}.\end{equation}
Also, we
exchange expectation and summation, and we apply  It\^o's Isometry to get
\begin{equation}\label{eq:itoisoHs}
\begin{split}
\E \left[ \left\|
\sum_{j=1}^N e_j W_{\mathcal{L},s}^j(t) \right\|_{{\mathcal{H}^s}}^2
\right]\;=\;&
\E \left[ \sum_{j=1}^{N} \lambda_j^{2s}  |W_{\mathcal{L},s}^j(t)|^2\right] \;
\\= \; & \sum_{j=1}^{N} \lambda_j^{2s}  \mu_j \E \left[ 
\left( \int_0^t e^{-\lambda_j^s (t - \tau)} dB_j(\tau)\right)^2  \right]\\
=  \;  &\sum_{j=1}^{N}\lambda_j^{2s}\mu_j   \int_0^t    e^{-2\lambda_j^s (t - \tau)}  d\tau \\
=  \;  &\frac12\,\sum_{j=1}^{N}\lambda_j^{s}\mu_j   \left(1 - e^{-2\lambda_j^s t }\right)\\
\leq  \; & \frac12\,\sum_{j=1}^{N}\lambda_j^{s}\mu_j
\end{split}
\end{equation}
which is finite, in light of~\eqref{eq:mudecay}
from Assumption \ref{ass:decay}.
{F}rom~\eqref{eq:itoisoHsOO}
and~\eqref{eq:itoisoHs}, we obtain~\eqref{9iwdsc87eHHHAHMA}, as desired.
\end{proof}

The next proposition deals with
the almost sure finiteness of the integral $\int_0^t \mathcal{L}^s y(s,\tau) d\tau$. 

\begin{proposition}\label{prop:l2l2hs}
Let Assumptions~\ref{ass:EXPL} and \ref{ass:decay} be satisfied. 
Let the initial data $y_0$ be deterministic,
with~$y_0 \in \mathcal{H}^{s/2}$. Then the solution to the state equation  \eqref{eq:stateeq} satisfies
 \begin{equation}\label{eq:l2ths}
 \|y(s)\|_{L^2(\Omega \times [0,T] ; \mathcal{H}^s)} \leq C
\end{equation}
for some~$C>0$. 
Moreover, for a fixed $s \in \mathscr{S}$, the random variable $ \omega \mapsto \|y(s, \omega)\|_{L^2( [0,T] ; \mathcal{H}^s)} $ is almost surely finite. 
\end{proposition}
\begin{proof} Using~\eqref{eq:OU}, we have that
\begin{eqnarray*} \|y(s)\|^2_{\mathcal{H}^s}&=&\sum_{j=1}^{+\infty} \lambda_j^{2s}\,|y_j(s)|^2\\
&=&
\sum_{j=1}^{+\infty} \lambda_j^{2s}\,\left|
y_{j,0} e^{-\lambda_j^s t} + \sqrt{ \mu_j } \int_0^t e^{-\lambda_j^s (t - \tau)} dB_j(\tau)
\right|^2.
\end{eqnarray*}
Hence, using the standard estimate~$(a+b)^2 \leq 2(a^2 + b^2)$, we conclude that
$$ \frac12\,\|y(s)\|^2_{\mathcal{H}^s}\le
\sum_{j=1}^{+\infty} \lambda_j^{2s}\,\left|
y_{j,0} e^{-\lambda_j^s t} \right|^2+\sum_{j=1}^{+\infty}\lambda_j^{2s}\,\mu_j
\left| \int_0^t e^{-\lambda_j^s (t - \tau)} dB_j(\tau)
\right|^2,$$
and therefore
\begin{equation}\label{eq:L2strongsolcondition2}
\begin{split}
\frac12\,\|y(s)\|_{L^2(\Omega \times [0,T] ; \mathcal{H}^s)}^2\,&=\frac12\,
\E \left[\int_0^T \|y(s)\|^2_{\mathcal{H}^s}\,dt\right]\\
&\le \int_0^T
\sum_{j=1}^{+\infty} \lambda_j^{2s}\,\left|
y_{j,0} e^{-\lambda_j^s t} \right|^2\,dt+
\E \left[\int_0^T\sum_{j=1}^{+\infty}\lambda_j^{2s}\,\mu_j
\left| \int_0^t e^{-\lambda_j^s (t - \tau)} dB_j(\tau)
\right|^2\,dt\right].
\end{split}\end{equation}
Now we analyze the right hand side of~\eqref{eq:L2strongsolcondition2}
term by term. First of all,
\begin{equation}\label{C5:01:01}
\begin{split}
\int_0^T
\sum_{j=1}^{+\infty} \lambda_j^{2s}\,\left|
y_{j,0} e^{-\lambda_j^s t} \right|^2\,dt\;
=\;& \int_0^T
\sum_{j=1}^{+\infty} \lambda_j^{2s}\,|y_{j,0}|^2 e^{-2\lambda_j^s t} \,dt\\
=\;& 
\sum_{j=1}^{+\infty} \lambda_j^{2s}\,|y_{j,0}|^2 \;\frac{1-e^{-2\lambda_j^s T} }{2\lambda_j^s}\\
\le\;& \frac12\,
\sum_{j=1}^{+\infty} \lambda_j^{s}\,|y_{j,0}|^2\\
=\;& \frac12\,\| y_0\|_{\mathcal{H}^{s/2}}^2.
\end{split}\end{equation}
Furthermore, by It\^o's Isometry,
$$ \E \left[
\left| \int_0^t e^{-\lambda_j^s (t - \tau)} dB_j(\tau)
\right|^2\right]
=
\E \left[
\int_0^t e^{-2\lambda_j^s (t - \tau)} d\tau\right]=\frac{1-e^{-2\lambda_j^s t}}{2\lambda_j^s}\le
\frac{1}{2\lambda_j^s},
$$
and therefore, for any fixed~$N\in{\mathbb{N}}$,
\begin{equation*}
\begin{split}
&\E \left[\int_0^T\sum_{j=1}^{N}\lambda_j^{2s}\,\mu_j
\left| \int_0^t e^{-\lambda_j^s (t - \tau)} dB_j(\tau)
\right|^2\,dt\right]\\
=\;& \int_0^T\sum_{j=1}^{N}\lambda_j^{2s}\,\mu_j\;\E \left[
\left| \int_0^t e^{-\lambda_j^s (t - \tau)} dB_j(\tau)
\right|^2\right]\,dt\\
\le\;&\frac12\int_0^T\sum_{j=1}^{N}\lambda_j^{s}\,\mu_j\,dt
\\=\;&\frac{T}2\sum_{j=1}^{N}\lambda_j^{s}\,\mu_j
\\ \le\;& c(T,s),
\end{split}\end{equation*}
for some~$c(T,s)>0$, thanks to Assumption~\ref{ass:decay}.
This and Fatou's Lemma imply that
\begin{equation}\label{C5:01:02}
\E \left[\int_0^T\sum_{j=1}^{+\infty}\lambda_j^{2s}\,\mu_j
\left| \int_0^t e^{-\lambda_j^s (t - \tau)} dB_j(\tau)
\right|^2\,dt\right]\le c(T,s).
\end{equation}
Then, combining~\eqref{eq:L2strongsolcondition2}, \eqref{C5:01:01} and~\eqref{C5:01:02},
we complete the proof of~\eqref{eq:l2ths}.

Finally, the almost sure statement in Proposition~\ref{prop:l2l2hs}
follows from~\eqref{eq:l2ths} and Lemma~\ref{BORELCANTE}.\end{proof}


\begin{theorem}\label{theo:existence}
Let Assumptions~\ref{ass:EXPL} and \ref{ass:decay} be satisfied. Let the initial data $y_0$ be deterministic,
with $y_0 \in \mathcal{H}^{s/2}$. 
Then,
for every $s\in \mathscr{S}$, there exists a unique solution $y = y(s)$ to the state system \eqref{eq:stateeq} in the sense of Definition \ref{def:solution}.
\end{theorem}  
\begin{proof}
Condition (1) in Definition \ref{def:solution} was verified in Proposition \ref{prop:l2l2hs}.
To fulfill condition (2) of Definition \ref{def:solution}, we choose a deterministic initial condition $y_{j,0} = \langle y_0(x) , e_j(x) \rangle \in \real$, and employ the series approximation of the $Q$-Wiener process \eqref{eq:tracenoise} to get the infinite system of It\^o equations
\begin{equation}\label{eq:itoequationsproof}
dy_j(t) =  - \lambda_j^s y_j(t) dt + \sqrt{ \mu_j } d B_j(t) \qquad \textup{ for } j \in \mathbb{N}.
\end{equation}
As $\sqrt{ \mu_j }$ and $\lambda_j^s$ are constant, for fixed $j$,
and $- \lambda_j^s y_j(t)$ is Lipschitz continuous, for fixed $s$, each It\^o equation  in \eqref{eq:itoequationsproof} has a unique strong solution $y_j(t,s)$, which depends continuously on the initial data, as proved for example on page~212 in \cite{LeGall}.

Lemma \ref{lemma:existence} shows that the sum $y(x,t) = \sum_{j=1}^{+\infty} y_j(t,s) e_j(x)$ is convergent,
and its limit $y(s)(x,t)$ is a $L^2(D)$-valued adapted stochastic process, which concludes the proof. 
\end{proof}

\subsection{Further space-time regularity and H\"older continuity}

In this section we prove further properties of solutions to
the state system \eqref{eq:stateeq}, which we will need in Section \ref{sec:controls}. 

\begin{proposition}\label{prop:l2l2l2-norm} 
Let $\mathcal{L}$ satisfy
Assumption~\ref{ass:ew} and let~$Q$ satisfy~\eqref{CA5}.
Let $y_0 \in L^2(D)$ be deterministic. Then any solution $y = y(s)$ to the state equation \eqref{eq:stateeq}  
satisfies the estimate
 \begin{equation}\label{eq:l2l2l2-norm-def}
 \|y(s)\|_{L^2\left(\Omega, L^2(D \times [0,T])\right)} \leq C .
\end{equation}
Moreover, for a fixed $s \in (0, 
+\infty)$, the random variable $ \omega \mapsto \|y(s, \omega)\|_{L^2(D \times [0,T]))} $ is almost surely finite. 
\end{proposition}
\begin{proof}
By~\eqref{eq:defSexpanded} and \eqref{eq:OU}, we know that
\begin{equation}\label{330}\begin{split}
\| y(s)\|^2_{L^2(D)} \;=\;& \sum_{j=1}^{+\infty} |y_j(s)|^2\\
=\;& \sum_{j=1}^{+\infty} \left|
y_{j,0}(s) e^{-\lambda_j^s t} + \sqrt{ \mu_j } \int_0^t e^{-\lambda_j^s (t - \tau)} dB_j(\tau)
\right|^2\\
\le\;& 2\left[ \sum_{j=1}^{+\infty} \left|
y_{j,0}(s) e^{-\lambda_j^s t} \right|^2+
\sum_{j=1}^{+\infty} \left| \sqrt{ \mu_j } \int_0^t e^{-\lambda_j^s (t - \tau)} dB_j(\tau)
\right|^2\right],
\end{split}\end{equation}
and therefore
 \begin{equation}\label{eq:l2l2l2-norm}
 \begin{aligned}
& \|y(s)\|_{L^2\left(\Omega, L^2(D \times [0,T])\right)}^2 \\
=\;& \E \left[ \int_0^T \| y(s)\|^2_{L^2(D)} dt \right] \\
\le\;& 2\left\{ \int_0^T \sum_{j=1}^{
+\infty}\left|y_{0,j}(s) e^{- \lambda_j^s t} \right|^2 \; +
\; \E \left[ \int_0^T \sum_{j=1}^{+\infty}  \left| \sqrt{ \mu_j }
\int_0^t e^{-\lambda_j^s (t - \tau)} dB_j(\tau) \right|^2 dt \right]\right\}.\\
 \end{aligned}
 \end{equation} 
For the first term
in the right hand side of \eqref{eq:l2l2l2-norm}
we calculate, using Assumption \ref{ass:ew}, that
 \begin{equation}\label{eq:l2l2-ic}
  \int_0^T \sum_{j=1}^{
+\infty}\left|y_{0,j}(s)  e^{- \lambda_j^s t} \right|^2  dt 
  =  \frac{1}{2} \sum_{j=1}^{+\infty}|y_{0,j}(s)|^2 \frac{1}{2\lambda_j^{s}} \; 
\left(1-e^{- 2 \lambda_j^s T}  \right)
  \leq  \frac{1}{2 \alpha^s } \| y_0\|^2_{L^{2}}.
 \end{equation}
For the second term in the right hand side of \eqref{eq:l2l2l2-norm}
we use It\^o's Isometry to get
$$
\E \left[ \left(\int_0^t e^{-\lambda_j^s (t - \tau)} dB_j(\tau) \right)^2 \right]=
\E \left[ \int_0^t e^{-2\lambda_j^s (t - \tau)} d\tau \right]=\frac{1-e^{-2\lambda_j^s t}}{2\lambda_j^s }.
$$
Hence,
we consider the partial sum  $j \leq N$, 
due to which we can exchange expectation and summation, and
conclude that
  \begin{equation}\label{eq:l2l2l2-calculation}
 \begin{aligned}
&  \E \left[ \int_0^T \sum_{j=1}^{N}
\left|   \sqrt{ \mu_j } \int_0^t e^{-\lambda_j^s (t - \tau)} dB_j(\tau) \right|^2 dt \right]\\
   =  \;& \int_0^T \sum_{j=1}^{N}   \mu_j
\E \left[ \left(\int_0^t e^{-\lambda_j^s (t - \tau)} dB_j(\tau) \right)^2 \right] dt \\
    = \;& \int_0^T  \sum_{j=1}^{N}   \frac{ \mu_j }{ 2 \lambda_j^s}
\left( 1 -  e^{-2 \lambda_j^st}\right) dt  \\
    \le \;& \int_0^T  \sum_{j=1}^{N}   \frac{ \mu_j }{ 2 \lambda_j^s}
\,dt   \\ \leq \;&c(T, s ) ,
 \end{aligned}
 \end{equation}
for some~$c(T,s)>0$,
thanks to 
Assumption~\ref{ass:ew} and the trace-class property of the noise in~\eqref{CA5}. 
Then, combining~\eqref{eq:l2l2-ic}
and~\eqref{eq:l2l2l2-calculation}, and applying Fatou's Lemma, we obtain~\eqref{eq:l2l2l2-norm-def},
as desired.

Then, from \eqref{eq:l2l2l2-norm-def} and Lemma~\ref{BORELCANTE}
we also obtain that $ \omega \mapsto \|y(s, \omega)\|_{L^2(D \times [0,T]))} $ is almost surely finite.
\end{proof}

Note that for Proposition~\ref{prop:l2l2l2-norm}  $y(s)(x,t)$ is only required to be a $L^2(D)$-valued adapted stochastic process, as proved in Lemma~\ref{lemma:existence}. The proof used only 
$L^2\left(\Omega, L^2(D \times [0,T])\right)$-norms, no additional $\mathcal{H}^s$-regularity is needed, therefore, Assumption~\ref{ass:decay} is not needed
in Proposition~\ref{prop:l2l2l2-norm}.

To ensure sufficient compactness properties needed to prove the existence of optimal controls in Section \ref{sec:controls}, we need to quantify the H\"older continuity in time of solutions to equation \eqref{eq:stateeq} in dependence of $s$. This is proved via the factorization method (see \cite{dpz}, Chapter II.5.3), which uses the semigroup generated by $\mathcal{L}^s$
and works with interpolation spaces. 

\begin{lemma}\label{lemma:holderpath}
Let the initial data $y_0 \in L^2(D)$ be deterministic, and let Assumptions~\ref{ass:EXPL} and \ref{ass:decay} be satisfied.  Then the sample paths of the process $y(s)(x,t)$ are in $C^{\delta}([0,T], L^2(D))$ for arbitrary $\delta \in (0, \frac{1}{2})$.
\end{lemma}
\begin{proof}
It suffices to verify that the trajectories  of the stochastic convolution are $\delta$-H\"older continuous. According to  \cite{dpz}, Theorem~5.15, this holds with $\delta \in (0, \beta - \epsilon )$ if the following condition on the Hilbert-Schmidt norm of $S(t)Q$, where $S(t)$ is the semigroup generated by $\mathcal{L}^s$ (see \eqref{eq:semigroup}), is satisfied:
\begin{equation}\label{eq:semigroupcondition}
 \int_0^T t^{- 2 \beta} \| S(t) Q \|^2_{HS} \;dt < +\infty,
\end{equation}
where
$$ \| P \|^2_{HS}:=\sqrt{\sum_{j=1}^{+\infty} \|Pe_j\|^2_{L^2(D)} }.$$
We observe that, in light of Assumption~\ref{ass:EXPL}
and~\eqref{eq:semigroup},
\begin{eqnarray*}
\| S(t) Q \|^2_{HS} &=& \sum_{j=1}^{+\infty} \| S(t) Q e_j\|^2_{L^2(D)}
\\&=& \sum_{j=1}^{+\infty} \mu_j^2\| S(t) e_j\|^2_{L^2(D)}
\\&=& \sum_{j=1}^{+\infty} \mu_j^2\| e^{-\lambda_j t} e_j\|^2_{L^2(D)}
\\&=& \sum_{j=1}^{+\infty} \mu_j^2 e^{-2\lambda_j t} \,.
\end{eqnarray*}
Notice also that $\mu_j$ is a bounded sequence, due to~\eqref{CA5},
therefore
$$ \| S(t) Q \|^2_{HS}\le \sum_{j=1}^{+\infty} \mu_j^2 \le
\sup_{j\in{\mathbb{N}}}\mu_j\,\sum_{j=1}^{+\infty}\mu_j,$$
which is finite, again by virtue of~\eqref{CA5}.

This gives
that~\eqref{eq:semigroupcondition} is verified when $\beta < \frac{1}{2}$,
which proves the desired result.
\end{proof}

\section{Differentiability of the control-to-state operator}\label{sec:differentiability}

In this section we prepare the way to formulate the optimality conditions. For this, it is necessary to look at the partial derivative of solutions to \eqref{eq:stateeq} in the control variable $s$.
Due to the need for $\omega$-wise (or ``pathwise'') definitions 
of such objects, we first prove
a preliminary result on Wiener Integrals.

\subsection{A property of the Wiener Integral}

In general, the stochastic integral is a random variable,
which does not a priori make sense pathwise: the integrator, in our case
Brownian Motion, is not of bounded variation, and therefore the stochastic
integral enjoys much weaker properties than a Riemann-Stieltjes-Integral. 

In this section we take a look at the stochastic integrals appearing in our analysis. First of all, note that the  integrands are deterministic, thus
providing a special case that is  called ``Wiener Integrals".
Moreover, due to the regularity of the integrands, which are of the form $\exp( - \lambda_j^s \tau )$, we are able to give
conditions on when the operator~$\frac{d}{ds}$ applied to the
Wiener Integral is well-defined as an $s$-dependent random variable.

\begin{lemma}\label{lemma:wienerintegral}
For any~$j\in\mathbb{N}$, let~$g_j:
{\mathscr{S}}\times[0,T]$.
Assume that, for any~$t\in[0,T]$,
the map~${\mathscr{S}}\ni s\mapsto g_j(t,s)$ is~$C^2$.

Let
$$ G_j(t,s):=\int_0^t g_j(\tau,s)\,dB_j(\tau)\qquad{\mbox{and}}\qquad
H_j(t,s):=\int_0^t \partial_s g_j(\tau,s)\,dB_j(\tau).$$
Assume that, for any~$j\in\mathbb{N}$ and
$s\in{\mathscr{S}}$,
\begin{equation}\label{DER2}| g_j(t,s)|+
|\partial_s g_j(t,s)|+|\partial_s^2 g_j(t,s)|\le C(s)\,\sqrt{\mu_j}\,\Gamma(t),\end{equation}
with~$C(s)\in(0,+\infty)$ for any fixed~$s$, 
\begin{equation}\label{DER3} M(s):=\sup_{\sigma\in(s/2,2s)} C(\sigma)<+\infty \qquad{\mbox{ for any fixed }}
s\in{\mathscr{S}},\end{equation}
and
\begin{equation}\label{DER4} \Gamma\in L^2([0,T],[0,+\infty)).\end{equation}
Then,
\begin{equation}\label{CA:01:001}
\partial_s\sum_{j=1}^{+\infty} G_j(t,s) \,e_j(x)=
\sum_{j=1}^{+\infty} \partial_s G_j(t,s) \,e_j(x)
=
\sum_{j=1}^{+\infty} H_j(t,s)\,e_j(x)
\end{equation}
as functions in~$L^2(\Omega,L^2(D\times[0,T]))$.
\end{lemma}

\begin{proof} 
First of all, we check that
\begin{equation}\label{CHA:CH}
\sum_{j=1}^{+\infty} G_j(t,s) \,e_j(x)\in L^2(\Omega,L^2(D\times[0,T])).
\end{equation}
To this end, we exploit~\eqref{DER2} and
\eqref{DER3} to see that
\begin{eqnarray*}
|G_j(t,s) |=
\left|\int_0^t g_j(\tau,s)\,dB_j(\tau) \right|
\le C(s)\,\sqrt{\mu_j}\,\int_0^t \Gamma(\tau)\,dB_j(\tau).
\end{eqnarray*}
Then, making use of~\eqref{DER4} together with
It\^o's Isometry, to see that, for every~$M\ge N\in{\mathbb{N}}$,
\begin{eqnarray*}
\left\| \sum_{j=N}^{M} G_j(t,s) \,e_j(x)\right\|^2_{L^2(\Omega,L^2(D\times[0,T]))}&=&\E\left[\int_0^T
\left\| \sum_{j=N}^{M} G_j(t,s) \,e_j(x)\right\|_{L^2(D)}^2
\,dt\right]\\
&=&\E\left[\int_0^T \sum_{j=N}^{M} |G_j(t,s) |^2 \,dt\right]\\&\le&C^2(s)\,\sum_{j=N}^{M}\mu_j\,\int_0^T\E\left[
\left| \int_0^t \Gamma(\tau)\,dB_j(\tau)\right|^2 \right]\,dt\\
&\le&C^2(s)\,\sum_{j=N}^{M}\mu_j\,\int_0^T\E\left[ \int_0^t \Gamma^2(\tau)\,d\tau \right]\,dt\\
&\le& C(s,T)\,\sum_{j=N}^{M}\mu_j.
\end{eqnarray*}
Notice that the latter quantity is infinitesimal for large~$N$ and~$M$,
thanks to~\eqref{CA5} and therefore the series in \eqref{CHA:CH}
produces a Cauchy sequence, thus proving~\eqref{CHA:CH}.

Fixed~$j\in\mathbb{N}$, $t\in[0,T]$, $s\in{\mathscr{S}}$
and~$h\in(-1,1)$ (to be taken sufficiently
small), we notice that
\begin{eqnarray*}
&&|G_j(t,s+h)-G_j(t,s)-hH_j(t,s)|\\
&=&\left| \int_0^t \Big( g_j(\tau,s+h)- g_j(\tau,s)-h\partial_sg_j(\tau,s)
\Big)\,dB_j(\tau)
\right|\\
&=&\left| \int_0^t \left( 
\int_0^ h \partial_sg_j(\tau,s+\sigma)\,d\sigma
-h\partial_sg_j(\tau,s)
\right)\,dB_j(\tau)
\right|\\
&=&\left| \int_0^t \left( 
\int_0^ h \Big(\partial_sg_j(\tau,s+\sigma)-\partial_sg_j(\tau,s)\Big)\,d\sigma
\right)\,dB_j(\tau)
\right|
\\
&=&\left| \int_0^t \left( 
\int_0^ h \left(
\int_0^\sigma
\partial_s^2 g_j(\tau,s+\rho)\,d\rho\right)\,d\sigma
\right)\,dB_j(\tau)
\right|\\&\le&\sqrt{\mu_j}\,
\int_0^t \left( 
\int_0^ h \left(
\int_0^\sigma
C(s+\rho)\,\Gamma(\tau)\,d\rho\right)\,d\sigma
\right)\,dB_j(\tau)
\\ &\le& M(s)\,\sqrt{\mu_j}\,\int_0^t \left( 
\int_0^ h \left(
\int_0^\sigma
\Gamma(\tau)\,d\rho\right)\,d\sigma
\right)\,dB_j(\tau)\\&=&
\frac{M(s)\,\sqrt{\mu_j}\,h^2}{2}\,\int_0^t 
\Gamma(\tau)\,dB_j(\tau),
\end{eqnarray*}
as long as~$h$ is small enough,
thanks to~\eqref{DER2} and~\eqref{DER3}. As a consequence,
\begin{eqnarray*}
\left| \frac{G_j(t,s+h)-G_j(t,s)}{h}-H_j(t,s)\right|^2
&\le& \frac{M^2(s)\,\mu_j\,h^2}{4}\,\left(\int_0^t 
\Gamma(\tau)\,dB_j(\tau)\right)^2.
\end{eqnarray*}
This, It\^o's Isometry and~\eqref{DER4} lead to
\begin{eqnarray*}
\E\left[\left| \frac{G_j(t,s+h)-G_j(t,s)}{h}-H_j(t,s)\right|^2\right]
&\le& \frac{M^2(s)\, \mu_j \,h^2}{4}\;\E\left[ \left(\int_0^t  \Gamma(\tau)\,dB_j(\tau)\right)^2\right]\\
&=&\frac{M^2(s)\,\mu_j\,h^2}{4}\;\E\left[ \int_0^t  \Gamma^2(\tau)\,d\tau\right]\\
&\le& C(s,T)\,\mu_j\,h^2,
\end{eqnarray*}
for some~$C(s,T)>0$. This estimate and Fatou's Lemma give that
\begin{equation*}
\E\left[\left| \partial_s G_j(t,s)-H_j(t,s)\right|^2\right]
\le
\lim_{h\to0}\E\left[\left| \frac{G_j(t,s+h)-G_j(t,s)}{h}-H_j(t,s)\right|^2\right]\le0.
\end{equation*}
Consequently, for any~$N\in{\mathbb{N}}$,
\begin{equation*}
\E\left[\int_0^T \sum_{j=1}^N\left| \partial_s G_j(t,s)-H_j(t,s)\right|^2 \,dt\right]=0.
\end{equation*}
Thus, using again Fatou's Lemma,
\begin{eqnarray*}
0 &=&\lim_{N\to+\infty}\E\left[\int_0^T \sum_{j=1}^N\left| \partial_s G_j(t,s)-H_j(t,s)\right|^2 \,dt\right]\\
&\ge&\E\left[\int_0^T \sum_{j=1}^{+\infty}\left| \partial_s G_j(t,s)-H_j(t,s)\right|^2 \,dt\right]
\\ &=&\E\left[\int_0^T \left\|
\sum_{j=1}^{+\infty}\big(\partial_s G_j(t,s)-H_j(t,s)\big)\,e_j(x)\right\|^2_{L^2(D)} \,dt\right]
\\ &=&
\left\|
\sum_{j=1}^{+\infty}\big(\partial_s G_j(t,s)-H_j(t,s)\big)\,e_j(x)\right\|^2_{L^2(\Omega,D\times[0,T])}
,\end{eqnarray*}
and accordingly
\begin{equation}\label{CA:01:002}
\sum_{j=1}^{+\infty} \partial_s G_j(t,s) \,e_j(x)
=
\sum_{j=1}^{+\infty} H_j(t,s)\,e_j(x)
\end{equation}
in~$L^2(\Omega,D\times[0,T])$.

Now we observe that, for any~$N\le M\in{\mathbb{N}}$,
\begin{eqnarray*}
&&
\left\| \sum_{j=N}^{M} \frac{ G_j(t,s+h)-G_j(t,s)}{h} \,e_j(x)
\right\|^2_{L^2(D)}+
\left\| \sum_{j=N}^{M} H_j(t,s) \,e_j(x)\right\|^2_{L^2(D)}
\\&=&\sum_{j=N}^{M} \frac{ |G_j(t,s+h)-G_j(t,s)|^2}{h^2}+
\sum_{j=N}^{M} |H_j(t,s)|^2\\
&=&
\sum_{j=N}^{M} \frac{ 1}{h^2}
\left|
\int_0^t \Big( g_j(\tau,s+h)- g_j(\tau,s)\Big)\,dB_j(\tau)
\right|^2
+
\sum_{j=N}^{M} \left| \int_0^t \partial_s g_j(\tau,s)\,dB_j(\tau)\right|^2
\\
&\le& M^2(s)\,
\sum_{j=N}^{M} \mu_j\,
\left|
\int_0^t \Gamma(\tau)\,dB_j(\tau)
\right|^2
+C^2(s)\,
\sum_{j=N}^{M} \mu_j\,\left| \int_0^t \Gamma(\tau)\,dB_j(\tau)\right|^2,
\end{eqnarray*}
in view of~\eqref{DER2} and~\eqref{DER3}. Using It\^o's Isometry and~\eqref{DER4}
we thereby find that
\begin{eqnarray*}
&& \;\E\left[
\left\| \sum_{j=N}^{M} \frac{ G_j(t,s+h)-G_j(t,s)}{h} \,e_j(x) \right\|^2_{L^2(D)} + \left\| \sum_{j=N}^{M} H_j(t,s) \,e_j(x)\right\|^2_{L^2(D)}\right]
\\ &\le& M^2(s)\,\left\{\E\left[ \sum_{j=N}^{M} \mu_j \left| \int_0^t \Gamma(\tau)\,dB_j(\tau) \right|^2\right]
+\E\left[ \sum_{j=N}^{M} \mu_j\left| \int_0^t \Gamma(\tau)\,dB_j(\tau)\right|^2\right]\right\}\\
&=&
M^2(s)\,\left\{ \sum_{j=N}^{M}\mu_j\,\E\left[ \int_0^t \Gamma^2(\tau)\,d\tau
\right] +\sum_{j=N}^{M}\mu_j\,\E\left[ \int_0^t \Gamma^2(\tau)\,d\tau\right]\right\}\\
&\le& C(s,T)\,\sum_{j=N}^{M}\mu_j,
\end{eqnarray*}
for some~$C(s,T)>0$. Therefore, recalling~\eqref{CA5},
we have that for any~$\epsilon>0$ there exists~$N_{\epsilon,s,T}\in{\mathbb{N}}$
such that if~$M\ge N\ge N_{\epsilon,s,T}$, we have that
$$ 
\left\| \sum_{j=N}^{M} \frac{ G_j(t,s+h)-G_j(t,s)}{h} \,e_j(x)
\right\|^2_{L^2(\Omega,D\times[0,T])}+
\left\| \sum_{j=N}^{M} H_j(t,s) \,e_j(x)\right\|^2_{L^2(\Omega,D\times[0,T])}\le \epsilon,$$
and so, by Fatou's Lemma,
$$ 
\left\| \sum_{j=N_{\epsilon,s,T}+1}^{+\infty} \frac{ G_j(t,s+h)-G_j(t,s)}{h} \,e_j(x)
\right\|^2_{L^2(\Omega,D\times[0,T])}+
\left\| \sum_{j=N_{\epsilon,s,T}+1}^{+\infty} H_j(t,s) \,e_j(x)\right\|^2_{L^2(\Omega,D\times[0,T])}\le \epsilon.$$
As a consequence, using It\^o's Isometry once again,
\begin{eqnarray*}
&& \frac12\,\left\|
\sum_{j=1}^{+\infty} \frac{G_j(t,s+h) -G_j(t,s)}{h}\,e_j(x)-
\sum_{j=1}^{+\infty} H_j(t,s)\,e_j(x)
\right\|^2_{L^2(\Omega,D\times[0,T])}\\
&\le& \left\|
\sum_{j=1}^{N_{\epsilon,s,T}} \frac{G_j(t,s+h) -G_j(t,s)}{h}\,e_j(x)-
\sum_{j=1}^{N_{\epsilon,s,T}} H_j(t,s)\,e_j(x)
\right\|^2_{L^2(\Omega,D\times[0,T])}+\epsilon\\
&=&
\left\|
\sum_{j=1}^{N_{\epsilon,s,T}} 
\int_0^t \left(\frac{g_j(\tau,s+h)-g_j(\tau,s)}{h}
-\partial_s g_j(\tau,s)\right)\,dB_j(\tau)\,e_j(x)
\right\|^2_{L^2(\Omega,D\times[0,T])}+\epsilon\\
&=& \E\left[\int_0^T
\sum_{j=1}^{N_{\epsilon,s,T}} \left|
\int_0^t \left(\frac{g_j(\tau,s+h)-g_j(\tau,s)}{h}
-\partial_s g_j(\tau,s)\right)\,dB_j(\tau)
\right|^2\,dt\right]+\epsilon\\
&=& \sum_{j=1}^{N_{\epsilon,s,T}}
\int_0^T \E\left[ 
\int_0^t \left(\frac{g_j(\tau,s+h)-g_j(\tau,s)}{h}
-\partial_s g_j(\tau,s)\right)^2\,d\tau\right]
\,dt+\epsilon.
\end{eqnarray*}
Hence, by~\eqref{DER2}, \eqref{DER3} and~\eqref{DER4},
\begin{eqnarray*}
&& \frac12\,\left\|
\sum_{j=1}^{+\infty} \frac{G_j(t,s+h) -G_j(t,s)}{h}\,e_j(x)-
\sum_{j=1}^{+\infty} H_j(t,s)\,e_j(x)
\right\|^2_{L^2(\Omega,D\times[0,T])}\\
&\le& M^2(s)\,h^2\,\sum_{j=1}^{N_{\epsilon,s,T}}\mu_j
\int_0^T \E\left[ 
\int_0^t \Gamma^2(\tau)\,d\tau\right]
\,dt+\epsilon\\&\le&
C(s,T)\,h^2\sum_{j=1}^{N_{\epsilon,s,T}}\mu_j+\epsilon.
\end{eqnarray*}
This, \eqref{CHA:CH} and Fatou's Lemma yield that
\begin{eqnarray*}
\epsilon&=&\lim_{h\to0} C(s,T)\,h^2
\sum_{j=1}^{N_{\epsilon,s,T}}\mu_j+\epsilon
\\ 
&\ge& \frac12\,\lim_{h\to0}
\left\|
\sum_{j=1}^{+\infty} \frac{G_j(t,s+h) -G_j(t,s)}{h}\,e_j(x)-
\sum_{j=1}^{+\infty} H_j(t,s)\,e_j(x)
\right\|^2_{L^2(\Omega,D\times[0,T])}\\
&=&\frac12\,\lim_{h\to0}
\left\|\frac1{h}\left( \sum_{j=1}^{+\infty} G_j(t,s+h) \,e_j(x)
-\sum_{j=1}^{+\infty} G_j(t,s) \,e_j(x)\right)-
\sum_{j=1}^{+\infty} H_j(t,s)\,e_j(x)
\right\|^2_{L^2(\Omega,D\times[0,T])}\\
&\ge&\frac12\,
\left\|\lim_{h\to0}\frac1{h}\left( \sum_{j=1}^{+\infty} G_j(t,s+h) \,e_j(x)
-\sum_{j=1}^{+\infty} G_j(t,s) \,e_j(x)\right)-
\sum_{j=1}^{+\infty} H_j(t,s)\,e_j(x)
\right\|^2_{L^2(\Omega,D\times[0,T])}\\
&=&
\frac12\,
\left\|\partial_s\sum_{j=1}^{+\infty} G_j(t,s) \,e_j(x)
-\sum_{j=1}^{+\infty} H_j(t,s)\,e_j(x)
\right\|^2_{L^2(\Omega,D\times[0,T])}.
\end{eqnarray*}
Since~$\epsilon$ can be taken arbitrarily small, we thereby conclude that
$$ \partial_s\sum_{j=1}^{+\infty} G_j(t,s) \,e_j(x)
=\sum_{j=1}^{+\infty} H_j(t,s)\,e_j(x)
$$ 
in~$L^2(\Omega,D\times[0,T])$. This and~\eqref{CA:01:002}
give~\eqref{CA:01:001}, as desired.
\end{proof}

Next, we recall Lemma~2.2 of~\cite{sprekelsvaldinoci},
which is an auxiliary result on the derivatives of a function of exponential type. 

\begin{lemma}\label{lemma:efunkt}
Define for fixed $\lambda>0$ and $t>0$ the real-valued function
\begin{equation}\label{eq:efkt}
E_{\lambda,t}(s) := e^{- \lambda^s t} \qquad \mbox{ for } s > 0 .
\end{equation}
Then there exist constants $C_i>0$ such that, for all $\lambda>0$, $t\in (0,T]$ and 
$s>0$ ,
we have that
$$\left|E_{\lambda,t}(s)\right| \leq C_0$$
and
\begin{equation}\label{eq:sgmajorant}
\left|\frac{d^k}{ds^k}E_{\lambda,t}(s)\right|  \leq \frac{C_k}{s^k} 
\,(1 + |\ln (t) |^k),\qquad
{\mbox{for all $1 \leq k \leq 4$.}} 
\end{equation}
\end{lemma}

Using Lemmata \ref{lemma:wienerintegral} and \ref{lemma:efunkt}
we can now take into account
the first and second derivatives of the solutions
with respect to the fractional parameter~$s$,
according to the following result:

\begin{proposition}\label{prop:sdifferential}
Let $\mathcal{L}$ satisfy Assumption~\ref{ass:ew}
 and let~$Q$ satisfy~\eqref{CA5}.
Let the initial data $y_0 \in L^2(D)$ be deterministic. 

Then
\begin{equation}\label{eq:sderivatives}
 \partial_s y(s) = \sum_{j=1}^{+\infty} \partial_s y_j(\cdot , s) e_j 
 \quad \textup{ and } \quad 
  \partial_{ss}^2 y(s) = \sum_{j=1}^{+\infty} \partial_{ss} y_j(\cdot , s) e_j ,
\end{equation}
are functions in $L^2\left(\Omega , L^2(D\times [0,T])\right)$.
 
 Moreover, for a fixed $s \in (0, +\infty)$, the random variables $ \omega \mapsto \|\partial_s y(s, \omega)\|_{L^2(D \times [0,T])} $
 and $ \omega \mapsto \|\partial_{ss} y(s, \omega)\|_{L^2(D \times [0,T])} $ are almost surely finite.  
\end{proposition}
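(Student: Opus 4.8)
The plan is to differentiate the explicit representation \eqref{eq:OU} term by term in $s$, to bound the resulting series in $L^2(\Omega, L^2(D\times[0,T]))$ using the uniform-in-$\lambda$ derivative estimate \eqref{eq:sgmajorant} of Lemma \ref{lemma:efunkt}, and finally to upgrade integrability to pathwise finiteness. For each $j$ I split $\partial_s y_j = \partial_s m_j + \partial_s W_{\mathcal{L},s}^{j}$ (and analogously for the second derivative). The deterministic mean $m_j(t,s) = y_{j,0}\,E_{\lambda_j,t}(s)$ is differentiated directly. For the stochastic convolution $W_{\mathcal{L},s}^{j}(t) = \sqrt{\mu_j}\int_0^t E_{\lambda_j,\,t-\tau}(s)\,dB_j(\tau)$ I invoke Lemma \ref{lemma:wienerintegral} with $g(s,\tau) = E_{\lambda_j,\,t-\tau}(s)$, which is $C^2$ in $s$ and continuous in $\tau$, to pass $\partial_s$ (resp. $\partial_{ss}^2$) inside the Wiener integral and to guarantee that the outcome lies in $L^2(\Omega)$. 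In all cases the kernels $\tfrac{d^k}{ds^k}E_{\lambda_j,\cdot}(s)$, $k=1,2$, are dominated by $\tfrac{C_k}{s^k}\phi_k$, and crucially this bound is \emph{uniform in $\lambda_j$}.

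For the $L^2(\Omega, L^2(D\times[0,T]))$ estimate, orthonormality of $\{e_j\}$ and the vanishing mean of the Wiener integrals (so that the deterministic--stochastic cross terms drop out in expectation) give $\E\int_0^T\|\partial_s y(s)\|_{L^2(D)}^2\,dt = \int_0^T\sum_j (\partial_s m_j)^2\,dt + \E\int_0^T\sum_j(\partial_s W_{\mathcal{L},s}^{j})^2\,dt$. For the deterministic part, $\sum_j(\partial_s m_j(t,s))^2 \le \tfrac{C_1^2}{s^2}\phi_1(t)^2\sum_j y_{j,0}^2 = \tfrac{C_1^2}{s^2}\phi_1(t)^2\|y_0\|_{L^2(D)}^2$, which upon integration in $t$ yields $\tfrac{C_1^2}{s^2}\|\phi_1\|_{L^2([0,T])}^2\|y_0\|_{L^2(D)}^2 < +\infty$, since $\phi_1\in L^2([0,T])$. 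For the stochastic part, It\^o's isometry gives $\E[(\partial_s W_{\mathcal{L},s}^{j}(t))^2] = \mu_j\int_0^t\big(\tfrac{d}{ds}E_{\lambda_j,\,t-\tau}(s)\big)^2\,d\tau \le \mu_j\tfrac{C_1^2}{s^2}\|\phi_1\|_{L^2([0,T])}^2$; summing over $j$ and integrating in $t$ produces the bound $\tfrac{C_1^2T}{s^2}\|\phi_1\|_{L^2([0,T])}^2\,{\rm Tr}\,Q<+\infty$. The identical computation with $k=2$, $\phi_2$, $C_2$ controls $\partial_{ss}^2 y$, and convergence of the defining series in $L^2(\Omega, L^2(D\times[0,T]))$ follows by applying the same bounds to tail sums exactly as in Proposition \ref{prop:l2l2hs}. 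I emphasize that the only summability inputs are $\|y_0\|_{L^2(D)}<+\infty$ and ${\rm Tr}\,Q<+\infty$; this is why neither Assumption \ref{ass:decay} nor $s\in\mathscr{S}$ enters the statement, the $\lambda$-independence of \eqref{eq:sgmajorant} being precisely what makes the spatial summation close.

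For the almost sure statement, since the first part yields $\E\big[\|\partial_s y(s)\|_{L^2(D\times[0,T])}^2\big]<+\infty$, the nonnegative random variable $\|\partial_s y(s)\|_{L^2(D\times[0,T])}^2$ is finite for almost every $\omega$, and likewise for $\partial_{ss}^2 y$. To also secure the pathwise (i.e.\ $\omega$-by-$\omega$) convergence of the series defining $\partial_s y(s)$ and $\partial_{ss}^2 y(s)$ in $L^2(D\times[0,T])$, I repeat the Chebyshev-plus-Borel--Cantelli argument of the preceding proposition, cf.\ \eqref{eq:l2l2-as}, verbatim, with $W_{\mathcal{L},s}^{j}$ replaced by $\partial_s W_{\mathcal{L},s}^{j}$ (resp.\ $\partial_{ss}^2 W_{\mathcal{L},s}^{j}$) and the constant $c(T,s)$ replaced by the constants obtained above.

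The main obstacle is the rigorous interchange of $\partial_s$ with the Wiener integral and with the infinite sum, together with controlling the logarithmic singularity carried by $\phi_k(t)=(1+|\ln t|)^k$ near $t=0$ that the $s$-differentiation produces. The first difficulty is dissolved by Lemma \ref{lemma:wienerintegral}, and the second by the observation that \eqref{eq:sgmajorant} separates variables: it grows only like $s^{-k}$, stays uniform in the eigenvalue $\lambda_j$, and retains a fixed time weight $\phi_k$ that is square-integrable on $[0,T]$. Consequently the spatial summation is governed solely by $\|y_0\|_{L^2(D)}^2$ and ${\rm Tr}\,Q$, and the time integration by $\|\phi_k\|_{L^2([0,T])}^2$, so no finer decay of the eigenvalues is needed.
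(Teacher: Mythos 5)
Your proof is correct and follows essentially the same route as the paper's: termwise differentiation of the explicit representation \eqref{eq:OU}, the uniform-in-$\lambda$ majorant \eqref{eq:sgmajorant} from Lemma \ref{lemma:efunkt}, Lemma \ref{lemma:wienerintegral} to pass $\partial_s$ inside the Wiener integral, It\^o's isometry plus tail estimates for $L^2(\Omega,L^2(D\times[0,T]))$ convergence, and a Chebyshev--Borel--Cantelli step for the pathwise statement. If anything, your version is slightly cleaner than the paper's Step~2, where the factor $\mu_j$ is dropped in \eqref{eq:aspart1}--\eqref{eq:aspart2} even though it is exactly what makes the tail sum converge via ${\rm Tr}\,Q<+\infty$; you keep it explicit, and you correctly observe that only $\|y_0\|_{L^2(D)}<+\infty$ and ${\rm Tr}\,Q<+\infty$ are needed, so neither Assumption \ref{ass:decay} nor $s\in\mathscr{S}$ enters.
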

\begin{proof}
{F}rom~\eqref{eq:OU} and~\eqref{eq:efkt}, we know that
\begin{equation}\label{8ihfeds234YSHNNA}
y_j(t,s) = y_{j,0} E_{\lambda_j,t} + \sqrt{ \mu_j } \int_0^t E_{\lambda_j,t-\tau} dB_j(\tau),
\end{equation}
Now we exploit Lemma~\ref{lemma:wienerintegral},
used here with~$g_j:=\sqrt{\mu_j} E_{\lambda_j,t}$,
in the case of the first derivative
and~$g_j:=\sqrt{\mu_j} \frac{dE_{\lambda_j,t}}{ds}$
in the case of the second derivative:
in this setting, in light of~\eqref{eq:sgmajorant}
we can take~$C(s):=C\,\left(\frac1s+\frac{1}{s^4}\right)$, with~$C>0$,
and $\Gamma(t):=1+|\ln t|^4$ in
Lemma~\ref{lemma:wienerintegral},
and then assumptions~\eqref{DER2},
\eqref{DER3} and~\eqref{DER4} are satisfied.

Accordingly, from~\eqref{8ihfeds234YSHNNA} and
Lemma~\ref{lemma:wienerintegral}
we obtain~\eqref{eq:sderivatives}, as desired.

Then, by Lemma~\ref{BORELCANTE},
we conclude that the first and
second derivatives of the solution with respect to~$s$ are almost surely finite
in~$L^2(D\times[0,T])$.
\end{proof}

Note that for Proposition \ref{prop:sdifferential} the function $y(s)(x,t)$ is only required to be a $L^2(D)$-valued adapted stochastic process, as proved in Lemma \ref{lemma:existence}. The proof used only 
$L^2\left(\Omega, L^2(D \times [0,T])\right)$-norms, no additional $\mathcal{H}^s$-regularity is needed, therefore, Assumption \ref{ass:decay} is not needed
in Proposition \ref{prop:sdifferential}.

\subsection{Optimality conditions}\label{sec:optimality}

In this section, we establish first-order necessary conditions and sufficient optimality conditions of optimal controls. 

\begin{theorem}\label{theo:optcond}
Let $y_0 \in L^2(D)$ be deterministic, and let $y = y(s)$ be a solution to
the state equation \eqref{eq:stateeq} in the sense of  the $L^2(D)$-valued stochastic
process $y(s): \Omega \times [0,T] \to L^2(D)$ of Lemma \ref{lemma:existence}.  Then the following holds true for a fixed realisation $\omega \in \Omega$: 
 
 \textbf{(i) necessary condition:} If $\bar{s}
=\bar{s}(\omega) $ is an optimal parameter for (IP) and $y(\bar{s})$ the associated unique solution to the state system \eqref{eq:stateeq}, then for almost every $\omega \in \Omega$
 \begin{equation}\label{eq:necessary}
  \int_0^T \int_D (y(\bar{s}) - y_D ) \partial_s y(\bar{s}) \, dx dt  \; + \; \Phi' (\bar{s}) \; = \; 0 .
 \end{equation}
  \textbf{(ii) sufficient condition:} If $\bar{s}=\bar{s}(\omega) \in (0, L)$ satisfies
the necessary condition \eqref{eq:necessary}, and if in addition 
   \begin{equation}
  \int_0^T \int_D \left(\partial_sy(\bar{s})\right)^2  + (y(\bar{s}) - y_D ) \partial_{ss}^2 y(\bar{s}) \, dx dt  \; + \; \Phi'' (\bar{s}) \; > \; 0 
 \end{equation}
 for almost every $\omega \in \Omega$, then $\bar{s}$ is optimal for (IP). 
\end{theorem}

\begin{proof}
By Proposition \ref{prop:sdifferential}, the map
$$ s \mapsto \mathcal{J}(s) := \mathcal{J}(y(s),s)$$ is twice differentiable on $(0,
+\infty)$. 
 By the chain rule, 
 \begin{equation*}
 \begin{aligned}
  \mathcal{J}'(\bar{s}) &= \frac{d}{ds} \mathcal{J}(y(\bar{s}), \bar{s})
= \partial_y \mathcal{J}(y(\bar{s}), \bar{s}) \circ \partial_s y(\bar{s}) \; + \; \partial_s \mathcal{J}(y(\bar{s}), \bar{s}) \\
  &= \int_0^T \int_D (y(\bar{s}) - y_D ) \partial_s y(\bar{s}) dx dt  \; + \; \Phi' (\bar{s}),
   \end{aligned}
 \end{equation*}
and  assertion (i) follows.
Also, assertion (ii) is a consequence of the following computation:
 \begin{equation*}
 \begin{aligned}
  \mathcal{J}''(\bar{s}) &= \frac{d}{ds} \mathcal{J}(y(\bar{s}), \bar{s}) = \partial_y \mathcal{J}(y(\bar{s}), \bar{s}) \circ \partial_s y(\bar{s}) \; + \; \partial_s \mathcal{J}(y(\bar{s}), \bar{s}) \\
  &= \int_0^T \int_D (y(\bar{s}) - y_D ) \partial_s y(\bar{s}) dx dt  \; + \; \Phi' (\bar{s}).
 \end{aligned}
 \end{equation*}
The proof of Theorem~\ref{theo:optcond} is thus complete.
\end{proof}

\section{Existence of optimal controls}\label{sec:controls}

The existence of
pathwise optimal controls is shown by checking that,
for fixed $\omega \in \Omega$, there exists a subsequence $y(s_k)$ which strongly converges to the optimal $y$ in $L^2(D \times [0,T])$. 

To show the strong convergence, we use a compactness result, which
proves that under certain assumptions solutions enjoy
a suitable H\"older regularity in time which is independent of the fractional exponent. 

\begin{lemma}[Compactness lemma]\label{lemma:compactness}
Let the initial data $y_0$ be deterministic,
with $y_0 \in \mathcal{H}^{s/2}$. Let Assumptions \ref{ass:EXPL} and \ref{ass:decay} be satisfied. 

Then, for a fixed realisation $\omega \in \Omega$,
the sequence  $\left\lbrace y_{s_k}(\omega) \right\rbrace_{k \in \mathbb{N}}$
of solutions to the state equation  \eqref{eq:stateeq}
with initial datum $y_0$ contains a subsequence that converges strongly in $L^2(D \times [0,T])$.
\end{lemma}

\begin{proof}
Recall that  for solutions of \eqref{eq:stateeq} in the sense of Definition \ref{def:solution} we know 
 \begin{enumerate}
  \item from Proposition \ref{prop:l2l2hs} that for all $s_k \in \mathscr{S}$ and  almost every $\omega \in \Omega$,
  \begin{equation*}
    \sup_k \left(\|y_{s_k}(\omega)\|_{L^2([0,T], \mathcal{H}^{s_k})}\right) < +\infty,
   \end{equation*}
  \item from Proposition \ref{prop:l2l2l2-norm} that for all $s_k \in (0, L)$ and for almost every $\omega \in \Omega$, 
  \begin{equation*}
    \sup_k \left(\|y_{s_k}(\omega) \|_{L^2(D \times [0,T])} \right) < +\infty,
   \end{equation*}
     \item from Lemma \ref{lemma:holderpath} that the trajectories of the family of stochastic processes $y_{s_k}(t)$ are in $C^{\delta_k}([0,T], L^2(D))$ for every $k$
     and  $\delta_k \geq \delta_* \geq \delta_0 > 0$.
 \end{enumerate}
Therefore, we know that $y_{s_k}$ is  a sequence (in $k$) of $L^2(D)$-valued stochastic processes (in $(x,t)$) with $\delta$-H\"older continuous sample paths
and  $y_{s_k} (\omega) \in L^2([0,T], \mathcal{H}^{s_k})$ for fixed $\omega \in \Omega$. 
Notice that, by~(3),
\begin{eqnarray*}&&
C\ge\|y_{s_k}(t)\|_{L^2(D)}^2=\sum_{i=1}^{+\infty} | y_{s_k,i}(t)|^2\\ {\mbox{and }}&&
C\,|t-t'|^{\delta_k}\ge
\| y_{s_k}(t)-y_{s_k}(t')\|_{L^2(D)}^2
=
\sum_{i=1}^{+\infty} | y_{s_k,i}(t)-y_{s_k,i}(t') |^2,
\end{eqnarray*}
and so
the
infinite string $\left(\left\lbrace y_{s_k,1}\right\rbrace_{k \in \mathbb{N}},\left\lbrace y_{s_k,2}\right\rbrace_{k \in \mathbb{N}},\ldots \right)$ lies in the space 
  \begin{equation*}
   C^{\delta_0}([0,T]) \times C^{\delta_0}([0,T]) \times \ldots
   \end{equation*}
  Hence, there exists a subsequence denoted by $(s_k)_m$ which converges in this product space to an infinite string of the form $\left((y^*_s)_1, (y^*_s)_2, \ldots\right)$, and every $(y^*_s)_j \in C^{\delta_0}([0,T])$.
  We define 
  \begin{equation*}
   y^*(x,t) = \sum_{j \in \mathbb{N}}  y^*_j e_j(x).
  \end{equation*}
  The convergence of $ y_{(s_k)_m} \longrightarrow y^*$ follows exactly as in the compactness lemma in the deterministic case, which is Lemma 6.1. of \cite{sprekelsvaldinoci}, by using also~(1) and~(2).
  The details are therefore omitted. 
\end{proof}

\begin{theorem}\label{theo:controlexist}
Let the initial data $y_0$ be deterministic,
and let Assumptions \ref{ass:EXPL} 
and \ref{ass:decay} be satisfied.  Moreover, let the initial data  satisfy 
\begin{equation}\label{SJok}
\sup_{s \in \mathscr{S}} \|y_0\|_{\mathcal{H}^s} < +\infty.\end{equation}
Then 
for almost every fixed $\omega \in \Omega$, the functional~$\mathcal{J}(\omega)$ attains a
minimum in $\mathscr{S}^{\circ}$, and moreover
 \begin{equation*}
\inf_{s \in \mathscr{S}} \mathcal{J}(\omega)  < +\infty .
 \end{equation*}
\end{theorem}

\begin{proof}
Note first that, by our assumptions on  $\Phi(s)$, we can find $s^* \in \mathscr{S}^{\circ }$ such that
$\mathcal{J}(s^*,\omega) < +\infty$, and, in view of~\eqref{eq:phiassumption}, we infer that
\begin{equation*}
 0 < \inf_{ s \in \mathscr{S}^{\circ } } \mathcal{J}(s, \omega ) < +\infty, \qquad \textup{ for any fixed } \omega \in \Omega .
\end{equation*}
 We pick a minimizing sequence $\lbrace s_k \rbrace_{k \in \mathbb{N}}
\subset  \mathscr{S}^{\circ } $,
and consider for every $k\in \mathbb{N}$
the unique solution $y_k = y(s_k)$ to the state system \eqref{eq:stateeq}
with initial datum $y_0$. 
 Without loss of generality, we can assume 
 \begin{equation*}
 \mathcal{J}(s_k) \leq 1 + \mathcal{J}(s^*) \qquad \forall k \in \mathbb{N} \textup{ for fixed } \omega \in \Omega.
 \end{equation*}
This and~\eqref{eq:cost} give
the almost sure finiteness of 
 $\|y_k(\omega)  \|_{L^2(D \times  [0,T]) } $. 

In view of~\eqref{eq:phiassumption}, the minimizing sequence  $s_k$ is bounded and we may assume 
without loss of generality 
that~$s_k \to \bar{s}$ for some $\bar{s} \in  \mathscr{S}^{\circ } $. 

Recalling~\eqref{SJok}
and Proposition~\ref{prop:l2l2l2-norm}, we can apply the compactness result
in Lemma \ref{lemma:compactness}, with $\delta_0 = \frac{1}{4}$, and
select a (not relabeled) subsequence
such that  $\left\lbrace y_k\right\rbrace_{k \in \mathbb{N}}$  converges strongly in $L^2(D \times [0,T])$ for fixed $\omega$ to a limit $\bar{y}$. 
Then, thanks to\footnote{As a side remark,
we note that the $\omega$-wise identification $\bar{y}(\omega) =
y(\bar{s}, \omega)$ is not enough to ensure that the 
optimal $ y(\bar{s})$ found in
Theorem~\ref{theo:controlexist}
is an adapted stochastic process, since~$\mathbb{P}$-measurability may
be lost when passing to the limit. Therefore, it remains an open problem
to show that  $ y(\bar{s})$ as a function of $(\omega, x, t)$ is a
solution to \eqref{eq:stateeq} in the sense of Definition \ref{def:solution}. } 
the uniqueness of solutions to the deterministic optimization problem,  which is Theorem~4.2 of \cite{sprekelsvaldinoci}, the identification $\bar{y}(\omega) = y(\bar{s}, \omega)$ is meaningful at the level of fixed $\omega$. 
\end{proof}

\begin{appendix}

\section{An auxiliary result of Borel-Cantelli type}

We state here a simple consequence of the Borel-Cantelli Lemma,
which is used several times in the proofs of the main results.

\begin{lemma}\label{BORELCANTE}
Let~$Z$ be a Banach space, with norm~$\|\cdot\|_Z$, and~$z:\Omega\to Z$.
Assume that
\begin{equation}\label{IAMla}
\|z\|_{L^2(\Omega,Z)}<+\infty.
\end{equation}
Then, the random variable
$$ \Omega\ni\omega\mapsto \|z(\omega)\|_Z$$
is almost surely finite.
\end{lemma}

\begin{proof} For any~$m\in{\mathbb{N}}\cup\{+\infty\}$, we define
$$ A_m:=\big\{\omega\in\Omega {\mbox{ s.t. }}
\|z(\omega)\|_{Z}^2\ge 2^m\big\}.$$
{F}rom~\eqref{IAMla} and the
Chebychev's inequality, we see that
\begin{eqnarray*}
\mathbb{P}(A_m)\le\frac{1}{2^m} \,\E \left[\|z\|_{Z}^2 \right]=\frac{1}{2^m} \,\|z\|_{L^2(\Omega,Z)}^2,
\end{eqnarray*}
and therefore
$$ \sum_{m=0}^{+\infty} \mathbb{P}(A_m)<+\infty.$$
{F}rom this and the Borel-Cantelli Lemma, we conclude that
$$ 0=\mathbb{P}(A_\infty)=\mathbb{P}\left(
\big\{\omega\in\Omega {\mbox{ s.t. }}
\|z(\omega)\|_{Z}^2=+\infty\big\}
\right),$$
which leads to the desired result.
\end{proof}

\end{appendix}

\section*{Acknowledgements} 
We are very much indebted to the Referee for her or his very
fruitful comments.
Research of Sections 1-3 was supported by
the Russian Science Foundation grant Nr. 14-21-00035. The second author
was supported by the Australian Research Council Discovery Project
DP170104880 NEW Nonlocal Equations at Work. The first author
thanks Nina
Gantert and Mauro Rosestolato for enlightening discussions on pathwise 
stochastic integration.

\bibliographystyle{abbrv}   


\def\cprime{$'$} \def\cprime{$'$}

\end{document}